\newcommand{\mc}{\mathcal}
\newcommand{\ms}{\mathscr}
\newcommand{\HC}{\textsf{HLC}\xspace}
\newcommand{\IC}{\textsf{IC}\xspace}
\newcommand{\lc}[1]{\ensuremath{_{{\scriptscriptstyle #1}}}}
\newcommand{\Ns}{\ensuremath{\mc{N}}\xspace}
\newcommand{\Cls}{\ensuremath{\mc{C}}\xspace}
\newcommand{\Cl}[1]{\ensuremath{C_{\scriptscriptstyle #1}}\xspace}
\newcommand{\rootT}[1]{\ensuremath{\rho_{_{\scriptscriptstyle #1}}}}
\newcommand{\To}[1]{\ensuremath{\ms{T}({#1})}\xspace}
\newcommand{\irr}[1]{\ensuremath{[#1]_{\textrm{irr}}}\xspace}
\newcommand{\PS}[1]{\ensuremath{\mc{P}(#1)}\xspace}
\DeclareMathOperator{\lca}{lca}
\newcommand{\rel}{\mathcal{X}}
\newcommand{\parent}{\mathsf{par}}
\providecommand{\keywords}[1]{\textbf{\textit{Keywords: }} #1}
\newtheorem{theorem}{Theorem}
\newtheorem{lemma}{Lemma}
\newtheorem{proposition}{Proposition}
\newtheorem{corollary}{Corollary}
\newtheorem{definition}{Definition}
\title{Alternative Characterizations of Fitch's Xenology Relation}
\author[1,2,*]{Marc Hellmuth}
\author[3,4,*]{Carsten R.\ Seemann}
\affil[1]{Institute	 of Mathematics and Computer Science, University of Greifswald, Walther-Rathenau-Strasse 47, D-17487 Greifswald, Germany }
\affil[2]{
	Saarland University, Center for Bioinformatics, Building E 2.1, P.O.\ Box 151150, D-66041 Saarbr{\"u}cken, Germany
	  }
\affil[3]{ 
	Bioinformatics Group, Department of Computer Science, Leipzig University, H{\"a}rtelstra{\ss}e 16-18,  D-04107 Leipzig, Germany\\
}
\affil[4]{ 
	Max-Planck-Institute for Mathematics in the Sciences, Inselstra{\ss}e 22,  D-04103 Leipzig, Germany
	}
\affil[*]{Email: \texttt{mhellmuth@mailbox.org} (MH), \texttt{carsten@bioinf.uni-leipzig.de} (CS)	}
\date{}
\begin{document}


\maketitle

\abstract{
According to Walter M.\ Fitch, two genes are xenologs if they are separated by at least one horizontal gene transfer.
This concept is formalized through Fitch relations, which are defined as
binary relations that comprise all pairs $(x,y)$ of genes $x$ and $y$ for which $y$
has been horizontally transferred at least once since it diverged from the
least common ancestor of $x$ and $y$. This definition, in particular, 
preserves the directional character of the transfer. 
Fitch relations are characterized by a small set of
forbidden induced subgraphs on three vertices and can be recognized in linear
time. 

In this contribution, we provide two novel characterizations of Fitch relations and
present an alternative, short and elegant proof of the characterization theorem
established by Gei{\ss} et al.\ in  
\emph{J.\ Math.\ Bio 77(5), 2018}.
 }

\bigskip
\noindent 
\keywords{Fitch Xenology;
					Fitch Relation;
          Phylogenetic Tree;
					Forbidden Induced Subgraphs;
					Neighborhoods;
					Gene Evolution
	}

\sloppy
\section{Introduction}

Genes are the molecular units of heredity holding the information to build
and maintain cells. A gene family covers all homologous genes, that is,
genes that share a common ancestor. During evolution, genes are mutated,
duplicated, lost and passed to organisms through speciation or horizontal
gene transfer (HGT), which is the exchange of genetic material among
co-existing species. Therefore, the history of a gene family is
equivalently described by a vertex- and edge-labeled rooted phylogenetic
tree, called (event-labeled) gene tree, in which the leaves correspond to
extant genes and the internal vertices to ancestral genes. The label of a
vertex highlights the event at the origin of the divergence leading to the
offspring, namely speciation and duplication, while edge-labels express
whether an edge corresponds to HGT or not \cite{Fitch:00}.

Homology relations are binary relations between homologous genes and are defined
by the particular vertex- and edge-labels of the gene tree
\cite{Fitch:70,Fitch:00,Koonin:05,Jensen:01}. Prominent examples of homology relations
are the orthology and paralogy relation that contain all pairs of genes $(x,y)$
where the last common ancestor is a speciation and duplication event, respectively
\cite{Altenhoff:16,Jensen:01,lafond2015orthology,Lafond:13,DEML:16,Lafond:16,Hellmuth:13a,Boecker:98,hellmuth_phylogenomics_2015}, 
RGC-relations that capture the structure of rare genomic changes (RGCs)
\cite{Hellmuth:17a,Boore:06,Donath:14a,Dutilh:08,Rokas:00} 
and the xenology relation that is defined in terms of HGT
\cite{Fitch:00,Hellmuth:17,GHLS:17,Geiss2018,Hellmuth:16a,Dessimoz2008}. 

Although, homology relations are defined by the \emph{true} evolutionary history
of the genes, which is usually not known with confidence, there are many methods
that allow to infer certain types of homology relations directly from genomic
sequence data without requiring any \emph{a priori} knowledge about the topology of
either the gene or the species tree. This includes tools for estimating and
resolving orthology assignments at the level of gene pairs, and thus, to derive 
orthology relations (see e.g.\
\cite{altenhoff2017oma,Lechner:11a,Lechner:14,WPFR:07,Mahmood30122011,inparanoid:10});
methods to infer RGC-relations (see \cite{Hellmuth:17a} for an overview) or
methods to infer HGT
by using certain characteristics of the genome sequences (see
\cite{LO:02,RSLD15,Rancurel:17} for an overview). While best match heuristics
have been very successful as approximations of the orthology relation
\cite{Altenhoff:16, Nichio:17}, the inference of HGT is still challenging. In
particular, the inference of pairwise xenology relationships has not been
satisfactorily solved. It is, however, not at all a hopeless task, since genes
that are imported by HGT from an ancestor of species A into an ancestor of
species B are expected to be more closely related than one could expect 
from the bulk of the genome \cite{RSLD15, Novichkov:04,Dessimoz2008}.

Homology relations are of fundamental importance in many fields of mathematical
and computational biology including the reconstruction of evolutionary
relationships across species
\cite{GCMRM:79,DBH-survey05,hellmuth_phylogenomics_2015,HW:16b,Dutilh:08,Boore:98,Boore:06,Rokas:00},
functional genomics and gene organization in species
\cite{Koonin:05,GK13,TKL:97,TG+00,Sempere:06,Rogozin:05,Sankoff:82,Lavrov:07,Donath:14a},
and the identification and testing of proposed mechanisms of genome evolution
\cite{MK:96,TKL:97,Abascal:12}. It is therefore of central interest to
understand whether such \emph{inferred} relations are ``mathematically correct'', i.e.,
whether there is an event-labeled gene tree that can explain the
given relations and thus, provides some evidence about the inferred data. 

By way of example, a mathematically correct orthology relation 
must form a co-graph (graphs that do not contain induced paths on four vertices) \cite{Corneil:81}
and is associated to a unique co-tree, which is equivalent
to a not necessarily fully resolved event-labeled gene tree that explains the
given orthology relation \cite{Boecker:98,Hellmuth:13a}. Empirically estimated
orthology relationships in general violate the co-graph property, suggesting
co-graph editing as a means to correct the initial estimate
\cite{lafond2015orthology,Lafond:13,DEML:16,Lafond:16,Hellmuth:13a,HW:16b,hellmuth_phylogenomics_2015}.
On genome-wide data sets, the cographs and their co-trees can then be used to even  infer the
evolutionary history of the underlying species
\cite{HHH+12,Lafond:16,HW:16b,Hellmuth:17,hellmuth_phylogenomics_2015}. Hence, 
understanding the mathematical structure of orthology helped to obtain deeper
insights into the complex processes that drive molecular evolution.

This contribution is concerned with the mathematical structure of xenology, which
is intimately related to HGT. The horizontal transfer of genomic material can be
annotated in the gene tree by assigning a label to the edge that points from
the horizontal transfer event to the next event in the history of the copy. 
The concept of xenology, as introduced by Walter M.\ Fitch \cite{Fitch:00}, 
calls two homologous genes \emph{xenologs}, if their history, 
since their common ancestor, involves a horizontal transfer
for at least one of them \cite{Jensen:01,Fitch:00}.
In other
words, two genes $x$ and $y$ are xenologs if the unique path between $x$ and $y$
in the underlying event-labeled gene tree contains a transfer edge. The class of
such relations has been characterized by Hellmuth et al.\ \cite{GHLS:17} 
and coincides with the class of complete multipartite graphs. 
Note that HGT is	
intrinsically a directional event, i.e., there is a clear distinction between
the horizontally transferred ``copy'' and the ``original'' that continues to be
vertically transferred. Preserving the directionality of horizontal transfer,
Gei{\ss} et al.\ \cite{Geiss2018} formalized this concept and introduced
\emph{Fitch relations}, which comprise all pairs of genes $(x,y)$ for which the
unique path from the last common ancestor $\lca(x,y)$ to $y$ in the gene tree
contains a transfer edge. It has been shown by Gei{\ss} et al.\ \cite{Geiss2018}
that Fitch relations are characterized by the absence of eight forbidden
subgraphs on three vertices and can be recognized in linear time. 

In order to understand Fitch relations in more detail, 
we provide in this contribution two additional characterizations, 
the first one is based on neighborhoods and the second one is based on forbidden subrelations on three vertices. 
The proof of the characterization theorem in \cite[Thm.\ 2]{Geiss2018} is quite involved and very 
technical, and it includes plenty of case studies. We will use the new characterization
to provide a novel, simpler and elegant proof of this theorem.

\section{Preliminaries}

\paragraph{\bf Basics.}
For a finite set $L$ we put $\irr{L \times L} \coloneqq (L \times L) \setminus \{ (x,x) \colon x \in L\}$ and 
${L \choose k} \coloneqq \{ L' \subseteq L \colon |L'| = k\}$.

All binary relations considered here are irreflexive, and we 
omit to mention it each time and simply call them \emph{relations}. 
A relation $\rel$ \emph{on $L$} is a subset $\rel \subseteq \irr{L\times L}$.

We consider directed graphs  $G=(V,E)$ with finite vertex set $V$ and 
edge set $E\subseteq \irr{V\times V}$ and thus, $G$ does not contain loops or multiple edges. 
For a directed graph $G=(V,E)$ and a subset $W\subseteq V$ 
let $G[W] = (W,F)$ denote the \emph{induced subgraph} of $G$ that has edge set $F\subseteq E$ such that 
every edge $(x,y)\in E$ with $x,y\in W$
is also contained in $F$. 

Every relation $\rel$ on $L$ can be represented by a directed graph 
$G=(V,E)$ with vertex set $V=L$ and edge set $E=\rel$. In what follows, we therefore will
interchangeably speak of $\rel$ as graph or relation and use
the standard graph terminology such as ``induced subgraph of 
$\rel$''.

\paragraph{\bf  Trees.}
A \emph{rooted tree} $T=(V,E)$ (on $L$) is 
an undirected connected cycle-free graph with finite leaf set $L$
and one distinguished  vertex $\rootT{T}$ that is called the \emph{root of T}.
The set of {\em inner} vertices of $T$ is denoted by $V^0 \coloneqq V\setminus L$. 
An edge $(v,w)$ is called \emph{inner edge} if $v,w\in V^0$ and 
\emph{outer edge} otherwise. 

In what follows, we always consider \emph{phylogenetic} trees $T$ (on $L$), that is,
rooted trees on $L$ such that the root $\rootT{T}$ has at least
degree $2$ and every other inner vertex $v\in V^0\setminus\{\rootT{T}\}$ has
at least degree $3$. Note that  phylogenetic trees $T$ on $L$ always 
satisfy $|L|\geq 2$, since $\rootT{T}$ has at least degree $2$.

Given a rooted tree $T=(V,E)$, we call $u\in V$ an \emph{ancestor} of 
$v\in V$, $u\preceq\lc{T} v$,  if $u$ lies on the unique path
from $\rootT{T}$ to $v$. We write $u\prec\lc{T} v$ for $u\preceq\lc{T} v$ and $u\neq v$. 
If neither $u\preceq\lc{T} v$ nor $v\prec\lc{T} u$, the vertices $u$ and $v$ are 
\emph{incomparable} and \emph{comparable} otherwise.  
We always write $(v,w)\in E$ to indicate that $v\prec\lc{T} w$. In the latter case, 
the unique vertex $v$ is called \emph{parent} of $w$, denoted by $\parent(w)$.
For a non-empty subset $Y\subseteq V$ of vertices, the \emph{last common
ancestor of} $Y$, denoted by $\lca\lc{T}(Y)$, is the unique $\preceq\lc{T}$-maximal
vertex of $T$ that is an ancestor of every vertex in $Y$. We will make use of
the simplified notation $\lca\lc{T}(x,y)\coloneqq\lca\lc{T}(\{x,y\})$ for $Y=\{x,y\}$.  
We will omit the explicit reference to $T$ for $\prec\lc{T}$ and $\lca\lc{T}$
 whenever it is clear which tree is considered.

\paragraph{\bf Clusters and Hierarchies.}
An arbitrary subset $\mc H \subseteq \PS{L}$ of the powerset of a finite set $L$ 
that satisfies $P\cap Q\in \{P, Q, \emptyset\}$ for all $P, Q\in \mc H$
is called \emph{hierarchy-like}. 
A \emph{hierarchy} on a finite set $L$ is a subset $\mc H \subseteq \PS{L}$
that is hierarchy-like and additionally satisfies 
$L\in \mc H$ and $\{x\}\in \mc H$ for all $x\in L$. 

Given a phylogenetic tree $T=(V,E)$, we can define for each vertex $v\in V$ 
the set of descendant leaves as 
$\Cl{T}(v) \coloneqq \{ x \in L \colon v \preceq x \}$, called a \emph{cluster} of $T$.
A cluster $\Cl{T}(v)$ is \emph{trivial} if $\Cl{T}(v) = L$ or $\Cl{T}(v)=\{v\}$ and 
\emph{non-trivial} otherwise.  
The \emph{cluster set of $T$} is then $\Cls(T) \coloneqq \{ \Cl{T}(v)\colon v \in V \}$. 
It is well-known that $\Cls(T)$ forms a hierarchy and 
that there is a one-to-one correspondence between
(isomorphism classes of) rooted trees and their cluster sets \cite{Semple2003,steel_phylogeny:_2016}, 
as summarized as follows.

\begin{theorem}[{\cite[Thm.\ 3.5.2]{Semple2003}}]
	For a given subset $\mc H \subseteq \PS{L}$, there is a 
	phylogenetic tree $T$ on $L$ with $\mc H = \Cls(T)$
	if and only if $\mc H$ is a hierarchy on $L$.

	Moreover, if there is such a phylogenetic tree $T$ on $L$, then, 
	up to isomorphism, $T$ is unique. 
  \label{thm:hierarchy-tree}
\end{theorem}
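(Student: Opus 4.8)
The plan is to prove the two directions of the equivalence separately and then treat uniqueness, throughout exploiting that the hierarchy-like condition $P\cap Q\in\{P,Q,\emptyset\}$ is precisely the combinatorial shadow of the ancestor relation of a tree.

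First I would verify that $\Cls(T)$ is a hierarchy whenever $T$ is a phylogenetic tree on $L$. That $L\in\Cls(T)$ and $\{x\}\in\Cls(T)$ for every $x\in L$ is immediate from $\Cl{T}(\rootT{T})=L$ and $\Cl{T}(x)=\{x\}$. For the hierarchy-like property I would take two vertices $u,v$ and split into cases: if $u,v$ are comparable, say $u\preceq\lc{T} v$, then every leaf below $v$ is below $u$, so $\Cl{T}(v)\subseteq\Cl{T}(u)$; if $u,v$ are incomparable, then no leaf $\ell$ can lie below both, since $\ell\preceq\lc{T}u$ and $\ell\preceq\lc{T}v$ would place $u,v$ on the common path from $\rootT{T}$ to $\ell$ and hence make them comparable. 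Thus $\Cl{T}(u)\cap\Cl{T}(v)=\emptyset$ in the incomparable case, and in either case $\Cl{T}(u)\cap\Cl{T}(v)\in\{\Cl{T}(u),\Cl{T}(v),\emptyset\}$.

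For the converse I would build a tree directly from a hierarchy $\mc H$. Take the vertex set to be $\mc H$, declare $Q\preceq P$ whenever $P\subseteq Q$ (reverse inclusion), so that $L$ becomes the root and the singletons the leaves. The crucial point is that every non-root $P\in\mc H$ has a well-defined parent: the members of $\mc H$ properly containing $P$ are pairwise nested, since two such sets $Q_1,Q_2$ satisfy $Q_1\cap Q_2\supseteq P\neq\emptyset$, whence the hierarchy-like condition forces $Q_1\subseteq Q_2$ or $Q_2\subseteq Q_1$, so they admit a unique inclusion-minimal element which I take as $\parent(P)$. Adding the edges $(\parent(P),P)$ yields a connected acyclic graph rooted at $L$, and by construction the descendant-leaf sets recover exactly the elements of $\mc H$. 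To see the tree is phylogenetic I would check the degrees: the children of a non-singleton $P$ are the inclusion-maximal proper subsets of $P$ in $\mc H$, which are pairwise disjoint (being incomparable, hence disjoint by the hierarchy-like condition) and cover $P$ since every singleton $\{x\}$ with $x\in P$ lies below one of them; a single child would then equal $P$, which is impossible, so each non-singleton has at least two children, giving degree at least $2$ at the root and at least $3$ at every other inner vertex.

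Finally, for uniqueness I would first show that $v\mapsto\Cl{T}(v)$ is injective on any phylogenetic tree. If $u\neq v$ had $\Cl{T}(u)=\Cl{T}(v)$, then, the clusters being nonempty, $u$ and $v$ must be comparable, say $u\prec\lc{T} v$; but the minimum-degree hypothesis supplies $u$ with a child $w$ off the path to $v$, whose nonempty cluster satisfies $\Cl{T}(w)\subseteq\Cl{T}(u)=\Cl{T}(v)$ while being disjoint from $\Cl{T}(v)$, a contradiction. Hence clusters determine vertices, and the ancestor relation is read off from inclusion of clusters. Given two phylogenetic trees $T,T'$ with $\Cls(T)=\Cls(T')$, the map sending each vertex of $T$ to the unique vertex of $T'$ carrying the same cluster is then a bijection preserving $\preceq$ and fixing $L$ (as the singleton clusters are matched), i.e.\ an isomorphism of phylogenetic trees. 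I expect the main obstacle to be the backward construction, specifically verifying that the parent map is well-defined (exactly where the hierarchy-like condition enters) and that the resulting tree meets the phylogenetic degree constraints rather than being merely some rooted tree; the forward direction and the injectivity step underlying uniqueness are comparatively routine once the cluster-ancestor dictionary is in place.
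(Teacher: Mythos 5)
This theorem is imported by the paper from Semple and Steel (Thm.\ 3.5.2) and is stated without proof, so there is no in-paper argument to compare against; your proposal is, in substance, the standard textbook proof, and it is correct. The forward direction (nested clusters for comparable vertices, disjoint clusters for incomparable ones) is fine, modulo a notational slip: with the paper's convention that $u\preceq v$ means $u$ is an \emph{ancestor} of $v$, ``$\ell$ lies below both $u$ and $v$'' should read $u\preceq \ell$ and $v\preceq \ell$, not $\ell\preceq u$ and $\ell\preceq v$. The backward construction correctly isolates where hierarchy-likeness is used (the proper supersets of a nonempty $P\in\mc H$ form a chain, so $\parent(P)$ is well defined), and your degree count via the pairwise-disjoint maximal proper subsets covering $P$ is the right way to get the phylogenetic degree conditions. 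The uniqueness argument via injectivity of $v\mapsto \Cl{T}(v)$ (using a child of $u$ off the path to $v$) is also standard and sound; to fully close it one should add that $u\preceq v$ iff $\Cl{T}(v)\subseteq\Cl{T}(u)$ for distinct vertices, so that edges, being cover relations of $\preceq$, are preserved by the cluster-matching bijection. One point worth making explicit: your parent construction tacitly assumes $\emptyset\notin\mc H$ (otherwise the supersets of $\emptyset$ include all the pairwise non-nested singletons and the parent is not well defined); this is the usual convention for hierarchies and is needed for the theorem to hold as stated, but the paper's definition of ``hierarchy'' does not literally exclude the empty set.
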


We say that a phylogenetic tree $T'$ is a \emph{coarsement} of a phylogenetic tree $T$, 
in symbols $T'< T$, if $\mathcal{C}(T')\subsetneq \mathcal{C}(T)$.

\paragraph{\bf  The Fitch Relation.}
\label{sec:fitch-graph}
We follow the notation in \cite{Geiss2018} and consider edge-labeled trees as defined as follows. 

\begin{definition}
An \emph{edge-labeled tree $(T,\lambda)$ (on $L$)} is  
a phylogenetic tree $T=(V,E)$ on $L$ together with a map $\lambda\colon E\to\{0,1\}$, 
called \emph{edge-labeling.}

For simplicity, we will speak of \emph{0-edges} and \emph{1-edges} of $T$ 
depending on their labeling.
\end{definition}

The concept of xenologs as defined by Fitch \cite{Fitch:00} 
was refined and formalized by Gei{\ss} et al.\ \cite{Geiss2018}
to preserve the directional character of gene transfer. 

\begin{definition}
  Given an edge-labeled 
  tree $(T,\lambda)$ on $L$ we set
  $(x,y)\in\rel_{(T,\lambda)}$ 
  for distinct $x,y\in L$ whenever 
	the (unique) path from  $\lca\lc{T}(x,y)$ to $y$ contains at least one 1-edge. 
\end{definition}

By construction, $\rel_{(T,\lambda)}$ is a binary irreflexive relation on $L$; 
and therefore, it can be regarded as a directed graph. 
It is easy to check that $\rel_{(T,\lambda)}$ is in
general neither symmetric nor antisymmetric. See Fig.\ \ref{fig:exmpl} for
an illustrative example. 

\begin{figure}[t]
\begin{center}
  \includegraphics[width=0.85\textwidth]{./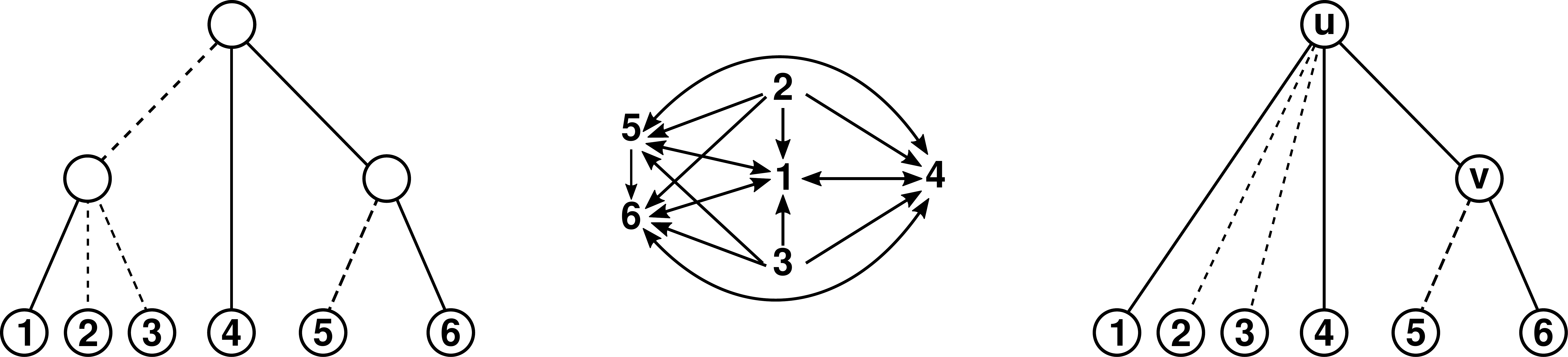}
\end{center}
\caption{An edge-labeled tree $(T,\lambda)$ \emph{(left)}
				 and the resulting relation $\rel_{(T,\lambda)}$ \emph{(middle)}
				 is shown. The unique least-resolved tree $(T^*,\lambda^*)$
					that explains $\rel_{(T,\lambda)}$ 
				 is shown in the right panel. All 1- and 0-edges are highlighted by 
					drawn-through and dashed lines, respectively.}
\label{fig:exmpl}
\end{figure}

\begin{definition}
An edge-labeled 
tree $(T,\lambda)$ on $L$ \emph{explains}
a given relation $\rel$ on $L$, whenever there is a 1-edge on
the path from $\lca(x,y)$ to $y$ if and only if $(x,y)\in \rel$, i.e., 
$\rel=\rel_{(T,\lambda)}$. 

A relation $\rel$  that can be explained by an edge-labeled tree is called \emph{Fitch relation}. 
\end{definition}

The enumeration of all induced subgraphs of size three of a relation $\rel$ is shown in Fig.\ \ref{fig:triangles}:
up to isomorphism there are 16 subgraphs $A_1$-$A_8$, called \emph{allowed triangles}, and
$F_1$-$F_8$, called \emph{forbidden triangles}.
It has been shown by Gei{\ss} et al.\ \cite{Geiss2018}
that Fitch relations can be characterized in terms of such triangles.

\begin{theorem}[{\cite[Thm.\ 2]{Geiss2018}}]
\label{thm:Geiss-char}
A relation $\rel$ is a Fitch relation if and only if 
$\rel$ does not contain one of the forbidden triangles $F_1$-$F_8$ as an induced subgraph
and hence, all induced subgraphs on three vertices are isomorphic to one of the allowed triangles $A_1$-$A_8$. 	
\end{theorem}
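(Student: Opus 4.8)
The plan is to prove both directions of the characterization, with the forward
direction (Fitch relation $\Rightarrow$ no forbidden triangle) being routine and
the reverse direction (no forbidden triangle $\Rightarrow$ Fitch relation) carrying
the real content. For the forward direction, I would argue that each forbidden
triangle $F_i$ encodes a contradictory pattern of $1$-edges along the tree paths.
Concretely, suppose $\rel=\rel_{(T,\lambda)}$ and fix three leaves
$x,y,z\in L$. The presence or absence of a $1$-edge on the path from
$\lca(x,y)$ to $y$ is determined by whether at least one transfer edge lies on
that path, and by the hierarchy structure of $\Cls(T)$ (Theorem~\ref{thm:hierarchy-tree})
the three pairwise last common ancestors $\lca(x,y),\lca(x,z),\lca(y,z)$ satisfy
the usual ultrametric-type constraint that at most two of them are distinct and
the two ``deeper'' ones coincide. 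I would show that this constraint, together
with the fact that edge-labels are fixed once and for all on $T$, forces the
induced pattern on $\{x,y,z\}$ to be one of the eight allowed triangles $A_1$--$A_8$,
so no $F_i$ can occur.

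For the reverse direction, which is the crux, the plan is to take an arbitrary
relation $\rel$ on $L$ whose every induced $3$-vertex subgraph is an allowed
triangle, and explicitly construct an edge-labeled tree $(T,\lambda)$ explaining
it. The natural strategy is to build a suitable hierarchy $\mc H\subseteq\PS{L}$
from $\rel$ and invoke Theorem~\ref{thm:hierarchy-tree} to obtain the tree $T$,
then define $\lambda$ by placing $1$-edges appropriately. I expect the right
candidate clusters to come from the ``in-neighborhood'' or ``out-neighborhood''
structure of $\rel$: for each vertex $y$, the set of $x$ with $(x,y)\notin\rel$
(i.e.\ the vertices that do not see $y$ as transferred) behaves like a nested
family, and the allowed-triangle condition should guarantee that these sets, or
sets derived from them, satisfy the pairwise intersection property $P\cap Q\in\{P,Q,\emptyset\}$
required of a hierarchy-like system. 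Since the paper advertises a neighborhood-based
characterization as its new tool, I would first establish that the forbidden
triangles are exactly the obstructions to this nesting, then use it to produce a
hierarchy and hence a tree.

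The main obstacle will be the correct definition of the edge-labeling $\lambda$
together with the verification that it reproduces $\rel$ exactly. Once $T$ is
fixed, I must specify, for each edge, whether it is a $0$-edge or a $1$-edge so
that for every ordered pair $(x,y)$ there is a $1$-edge on the path from
$\lca(x,y)$ to $y$ if and only if $(x,y)\in\rel$. The subtlety is that whether
$(x,y)\in\rel$ depends only on the labels below $\lca(x,y)$ on the $y$-side, so
the labeling must be chosen consistently across all pairs sharing a common
ancestor; an edge incident to a given subtree contributes to many pairs at once.
I would handle this by a top-down induction on the tree, at each vertex $v$
deciding the labels of the edges from $v$ to its children according to the
restriction of $\rel$ to pairs whose $\lca$ is exactly $v$, and then checking
that the allowed-triangle property makes these local decisions globally
compatible. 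The final step is to confirm $\rel=\rel_{(T,\lambda)}$, which reduces
to a case check on whether $\lca(x,y)$ equals or strictly dominates the branch
points governing the relevant path, and this is where the absence of the
forbidden triangles must be invoked one last time to rule out the inconsistent
configurations.
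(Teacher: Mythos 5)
Your overall strategy for the hard direction (build candidate clusters from the sets $N[y]=\{x\colon (x,y)\notin\rel\}\cup\{y\}$, show they form a hierarchy-like family, invoke Theorem~\ref{thm:hierarchy-tree} to get a tree, then label edges) is the same machinery the paper uses, and the easy direction is indeed routine. But there is a genuine gap at the heart of your plan: you propose to ``first establish that the forbidden triangles are exactly the obstructions to this nesting,'' and that claim is false. The forbidden triangle $F_1$, i.e.\ $\rel=\{(c,b)\}$ on $L=\{a,b,c\}$, has $N[a]=N[c]=\{a,b,c\}$ and $N[b]=\{a,b\}$, so $\Ns[\rel]=\{\{a,b,c\},\{a,b\}\}$ is perfectly hierarchy-like even though $\rel$ is not a Fitch relation. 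Hierarchy-likeness of the neighborhoods (\HC) is necessary but not sufficient; the paper has to add a second condition, the inequality-condition (\IC): for every $N\in\Ns[\rel]$ and every $y\in N$, $|N[y]|\le|N|$. In the $F_1$ example this is exactly what fails ($a\in N[b]$ but $|N[a]|>|N[b]|$). Your sketch contains nothing playing the role of \IC, and without it your construction breaks at the verification stage: when a $1$-edge $(\parent(v),v)$ lies on the path from $\lca(x,y)$ to $y$, you need $N[y]\subseteq \Cl{T}(v)$ to conclude $x\notin N[y]$ and hence $(x,y)\in\rel$; hierarchy-likeness alone only gives you that $N[y]$ and $\Cl{T}(v)$ are nested in \emph{some} order, and \IC is what rules out $\Cl{T}(v)\subsetneq N[y]$.

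Beyond that, your route differs from the paper's in a way worth noting. The paper does not directly show ``no forbidden triangle $\Rightarrow$ the construction works''; it first reformulates the triangle condition as a local implication (if $(c,b)\in\rel$ and $(a,b)\notin\rel$ then $(c,a)\in\rel$ and $(a,c),(b,c)$ are both in or both out of $\rel$), and then proves the contrapositive: a failure of \HC \emph{or} of \IC yields a concrete triple $\{a,b,c\}$ violating that implication, hence a forbidden triangle. If you want to keep your direct top-down construction, you would still have to prove that the allowed-triangle hypothesis implies both \HC and \IC, which is essentially the same case analysis run forwards. Two smaller points: your statement that the two ``deeper'' pairwise last common ancestors coincide is backwards (the two that coincide are the ones closer to the root, both equal to $\lca(x,y,z)$), and your proposed vertex-by-vertex labeling is unnecessary --- once the clusters are the sets in $\Ns[\rel]$, the labeling is globally determined by declaring $(\parent(v),v)$ a $1$-edge exactly when $\Cl{T}(v)\in\Ns[\rel]$.
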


$(T,\lambda)$ is \emph{least-resolved} w.r.t.\ a relation $\rel$, if 
$(T,\lambda)$ explains $\rel$ and 
there is no coarsement $T'< T$ 	 and no labeling $\lambda'$ 
such that the edge-labeled tree $(T',\lambda')$ still explains $\rel$. 
Gei{\ss} et al.\ \cite{Geiss2018} characterized such least-resolved trees
 and, even more, showed that they are unique.

\begin{theorem}[{\cite[Thm.\ 1 and Lemma 11]{Geiss2018}}] 
	\label{thm:least-res}
	Let $\rel$ be a Fitch relation  and 
  $(T,\lambda)$ be an edge-labeled tree  that explains $\rel$.  The
  following two statements are equivalent:
  \begin{enumerate}
  \item \label{it:-1} $(T,\lambda)$ is least-resolved 
  w.r.t.\ $\rel$.
  \item\label{it:1}
    (a) Every inner edge  of $(T,\lambda)$ is a 1-edge and \\
    (b) for every inner edge $(\parent(v),v)$ there is an outer 0-edge $(v,x)$ in
        $(T,\lambda)$.
	\end{enumerate}

	Moreover, the least-resolved tree w.r.t.\ the Fitch relation $\rel$ is, up to isomorphism, 
	unique. 
\end{theorem}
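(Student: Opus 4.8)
The plan is to reduce the statement to two contraction facts together with an intrinsic description of the clusters of a least-resolved tree. For a leaf $x$ write $C_x\coloneqq\{x\}\cup\{w\in L\colon (w,x)\notin\rel\}$; this set depends only on $\rel$ and will serve as the bridge between different explaining trees. First I would record two facts about contracting an inner edge (each of which deletes exactly one non-trivial cluster and hence yields a proper coarsement). Fact~1: contracting an inner $0$-edge leaves $\rel_{(T,\lambda)}$ unchanged, because for every ordered pair the path from the last common ancestor to the second coordinate either is unchanged or differs only by deletion of the contracted $0$-edge, so the presence of a $1$-edge on it is unaffected; one only checks that the last common ancestor maps correctly under the identification of $\parent(v)$ with $v$. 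Fact~2: if every inner edge is a $1$-edge but some inner edge $(\parent(v),v)$ has no outer $0$-edge below $v$, then every edge leaving $v$ downward is a $1$-edge (the inner ones by hypothesis, the outer ones because none is a $0$-edge), and contracting the $1$-edge $(\parent(v),v)$ still explains $\rel$: a path that formerly crossed this edge now crosses one of the still-present $1$-edges $(v,c_i)$, while paths confined to a single child subtree of $v$ are untouched.

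For the implication $\ref{it:-1}\Rightarrow\ref{it:1}$ I would argue contrapositively. If some inner edge is a $0$-edge, Fact~1 produces a proper coarsement that still explains $\rel$; if all inner edges are $1$-edges but (b) fails at some $v$, Fact~2 does the same. These two cases exhaust all ways in which ``(a) and (b)'' can fail, so in each case $(T,\lambda)$ is not least-resolved.

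The converse and the uniqueness claim both rest on two lemmas. The \emph{Cluster Identity}: if $(T,\lambda)$ satisfies (a) and (b), then every non-trivial cluster $\Cl{T}(v)$ equals $C_x$, where $x$ is the leaf of the outer $0$-edge guaranteed at $v$ by (b); indeed for $w\in\Cl{T}(v)\setminus\{x\}$ we have $\lca(w,x)=v$ with $v\to x$ the single $0$-edge $(v,x)$, giving $(w,x)\notin\rel$, whereas for $w\notin\Cl{T}(v)$ the path $\lca(w,x)\to x$ runs through the inner $1$-edge $(\parent(v),v)$, giving $(w,x)\in\rel$. The \emph{Forced-cluster Lemma}: for every leaf $x$ and every edge-labeled tree $\tilde T$ explaining $\rel$, the set $C_x$ is a cluster of $\tilde T$. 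To prove it I would assume $C_x\notin\Cls(\tilde T)$, note that then $C_x$ is non-trivial, set $v'\coloneqq\lca_{\tilde T}(C_x)$ so that $\Cl{\tilde T}(v')\supsetneq C_x$, and pick $z\in\Cl{\tilde T}(v')\setminus C_x$ (so $(z,x)\in\rel$) together with $w_0\in C_x$ in a different child subtree of $v'$ than $x$ (so $\lca_{\tilde T}(w_0,x)=v'$, and since $(w_0,x)\notin\rel$ the path $v'\to x$ carries no $1$-edge); a case split on whether $\lca_{\tilde T}(z,x)$ equals $v'$ or lies strictly below it then contradicts $(z,x)\in\rel$, since in the first case the $1$-edge-free path $v'\to x$ would have to contain a $1$-edge and in the second the path $\lca_{\tilde T}(z,x)\to x$ is a suffix of it. Assembling, for $\ref{it:1}\Rightarrow\ref{it:-1}$ any proper coarsement $T'<T$ omits some non-trivial cluster $\Cl{T}(v)=C_x$, which by the Forced-cluster Lemma lies in $\Cls(T')$ --- a contradiction, so $(T,\lambda)$ is least-resolved. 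For uniqueness, if $(T_1,\lambda_1)$ and $(T_2,\lambda_2)$ are both least-resolved they satisfy (a) and (b), so by the Cluster Identity and the Forced-cluster Lemma every non-trivial cluster of one is a cluster of the other; hence $\Cls(T_1)=\Cls(T_2)$ and $T_1\cong T_2$ by Theorem~\ref{thm:hierarchy-tree}, while the labels agree because inner edges are $1$-edges by (a) and the label of an outer edge $(v,x)$ is forced by $(w,x)\in\rel$ for any leaf $w$ with $\lca(w,x)=v$.

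I expect the Forced-cluster Lemma to be the main obstacle, since it is the only point where one reasons about an arbitrary explaining tree rather than the fixed $(T,\lambda)$; the delicate parts are guaranteeing the existence of the witness $w_0\in C_x$ with $\lca_{\tilde T}(w_0,x)=v'$ and verifying that the path from $\lca_{\tilde T}(z,x)$ to $x$ is indeed a suffix of the $1$-edge-free path $v'\to x$. Once these lemmas are isolated, the contraction facts and the Cluster Identity are routine bookkeeping.
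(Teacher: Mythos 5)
Your argument is correct, but note that the paper itself offers no proof of this statement: it is imported verbatim from Gei{\ss} et al.\ (Theorem~1 and Lemma~11 of \cite{Geiss2018}), so there is no in-paper proof to match against. What you have written is a legitimate self-contained proof, and it is instructive to see how it relates to the machinery the paper develops for its \emph{other} results. Your set $C_x$ is exactly the paper's neighborhood $N[x]$, and your Forced-cluster Lemma is precisely the paper's Lemma~\ref{lemma: neighbors are clusters} (proved there by choosing a $\subseteq$-minimal cluster containing $N[y]$ rather than taking $\lca(C_x)$; both arguments work). Your Cluster Identity is the observation the paper makes informally after defining $N[y]$ and uses implicitly in its Proposition that $\To{\rel}$ is least-resolved --- except that the paper's Proposition invokes the cited theorem as a black box, whereas you use the identity to \emph{prove} it. The two contraction facts and the final assembly (a missing non-trivial cluster of a coarsement would have to be some $C_x$, which every explaining tree must contain; uniqueness via $\Cls(T_1)=\Cls(T_2)$ and Theorem~\ref{thm:hierarchy-tree}, with labels forced by (a) on inner edges and by $\rel$ on outer edges) are sound; the only points requiring care --- existence of $w_0$ with $\lca_{\tilde T}(w_0,x)=v'$, which follows from $v'=\lca_{\tilde T}(C_x)$ and $|C_x|\ge 2$, and the suffix property of the path from $\lca_{\tilde T}(z,x)$ to $x$ --- you handle correctly. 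In short: a correct independent proof of a result the paper only cites, and one that is pleasantly consonant with the paper's own neighborhood-based characterization.
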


It is worth to be mentioned that deciding whether a relation $\rel$ on $L$ is a Fitch relation and, in the positive
case, to construct the unique least-resolved tree $(T,\lambda)$ that explains $\rel$ can be done
in $O(|L|+|\rel|)$ time, cf.\ \cite[Section 6]{Geiss2018}.

\begin{figure}[t]
\begin{center}
  \includegraphics[width=0.85\textwidth]{./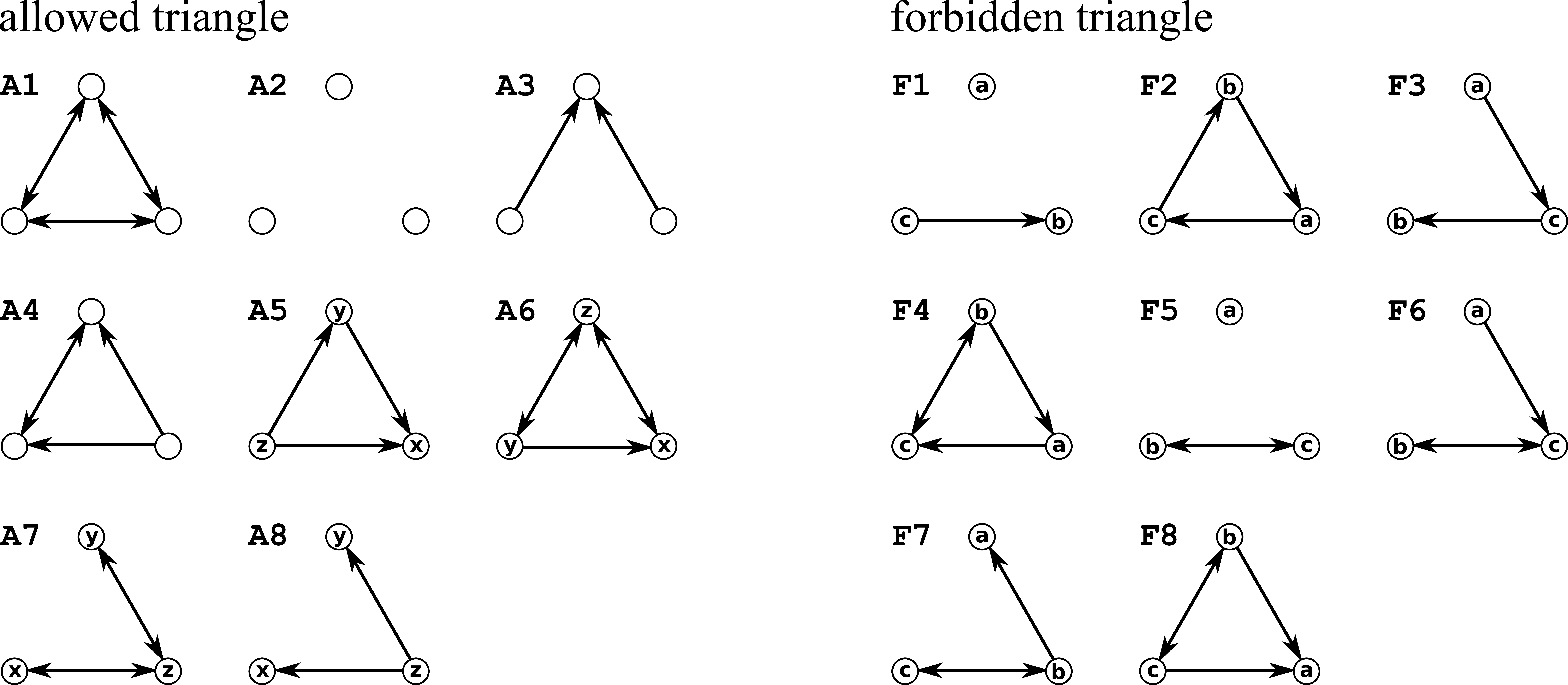}
\end{center}
\caption{ Shown is the graph representation for all
  possible relations $\rel\subseteq \irr{L\times L}$ with $|L|=3$. The relations are grouped into allowed ($A_1-A_8$) and forbidden ($F_1-F_8$) triangles. 
	\emph{The figure is adopted from \cite{Geiss2018}.}
	}
 
\label{fig:triangles}
\end{figure}

\section{Alternative Characterizations}

\subsection{Characterization via Neighborhoods}

Theorem \ref{thm:Geiss-char} provides a characterization in terms of
forbidden and allowed triangles. In what follows, we will present  
a new characterization in terms of neighborhoods. 

\begin{definition}
	Let $\rel$ be a relation on $L$. 
  The \emph{(complementary) neighborhood} $N[y]$ of $y \in L$ w.r.t.\ $\rel$ is defined as follows:
	\[ N[y] \coloneqq \{ x \in L \setminus \{y\} \colon (x,y)\notin \rel\} \cup \{y\}. \]
	Moreover, we define the 
	set of neighborhoods w.r.t.\ $\rel$ as follows:
	\begin{align*}
		\Ns[\rel] &\coloneqq \{ N[y] \colon y \in L\}.
	\end{align*}
\end{definition}

Essentially, $N[y]$ covers $y$ and all incoming neighbors of $y$ in the complement of $\rel$. 
To give some intuition, why we defined  $N[y]$ and 	$\Ns[\rel]$ we refer first to 
Theorem \ref{thm:least-res} and consider the unique least-resolved tree $(T,\lambda)$
that explains the Fitch relation $\rel$. 
Theorem \ref{thm:least-res} implies that each inner edge $(\parent(v),v)$ 
of  $(T,\lambda)$ must be a 1-edge and 
$v$ must be incident to an outer 0-edge $(v,y)$. Now, consider the cluster 
$\Cl{T}(v)$. Since $(v,y)$ is a 0-edge and $y$ is a leaf, we can observe
that for every leaf
$x\in \Cl{T}(v)$ with $x\neq y$ it must hold $(x,y)\notin \rel$. 
Moreover, since $(\parent(v),v)$ is a 1-edge, we have $(x,y)\in \rel$ for all leaves 
$x\notin \Cl{T}(v)$. Hence, the latter two arguments, together with $y\in N[y]$, imply that 
$N[y]$ provides precisely all elements of this particular cluster $\Cl{T}(v)$. 
For an example, consider the least-resolved tree $(T^*,\lambda^*)$ for 
$\rel = \rel_{(T,\lambda)}$ in Fig.\ \ref{fig:exmpl}. 
There is only one inner 1-edge $(u,v)$ in $(T^*,\lambda^*)$. 
The vertex $v$ is incident to the outer 0-edge $(v,5)$. Obviously, $(6,5)\notin \rel$
and $N[5]=\{5,6\}=\Cl{T^*}(v)$. In particular, $\Ns[\rel]$ contains \emph{all} 
non-trivial clusters of $T^*$. 

In what follows, we show that such neighborhoods can be used to characterize
Fitch relations. Essentially, we prove that $\rel$ is a Fitch relation if and only
if  $\Ns[\rel]$ is hierarchy-like and the elements in  $\Ns[\rel]$ are ``well-behaved''
as defined as follows.
\begin{definition} \label{C:D:conditions, HLC, IC, ELC}
	Let $\rel$ be a  relation on $L$. Then, we say that $\rel$ satisfies
	\begin{itemize}
		\item[$\bullet$] the \emph{hierarchy-like-condition (\HC)}, if $\Ns[\rel]$ is hierarchy-like; and 
		\item[$\bullet$] the \emph{inequality-condition (\IC)}, if for every neighborhood $N \in \Ns[\rel]$ and every $y \in N$, we have $|N[y]|\le|N|$.
	\end{itemize}
\end{definition}

To see the intuition behind the inequality-condition, consider the relation 
$\rel = \{(c,b)\}$ on $L=\{a,b,c\}$ which corresponds to the forbidden
subgraph $F_1$ in Fig.\ \ref{fig:triangles}. Hence, $\rel$ is not a Fitch relation. 
In this example, $N[a] = N[c] = \{a,b,c\}$ and $N[b] = \{a,b\}$. Thus, 
$\Ns[\rel] = \{\{a,b,c\},\{a,b\}\}$ is hierarchy-like, 
although $\rel$ cannot be explained by any tree. 
Hence, hierarchy-likeness of $\Ns[\rel]$ is not sufficient to characterize
Fitch-relations. However, for $N\coloneqq N[b] \in \Ns[\rel]$ and $a\in N$
we have $|N[a]|>|N|$ and thus, $\rel$ does not satisfy \IC. 
As we shall see, Fitch relations are characterized by \HC and \IC. 

We start with proving the necessity of \HC.

\begin{lemma} \label{lemma: neighbors are clusters}
	Let $\rel$ be a Fitch relation on $L$ and $(T,\lambda)$ be some edge-labeled
	tree that explains $\rel$. 
	Then, we have  $N[y] \in \Cls(T)$ for all $y \in L$ and 
	$\Ns[\rel] \subseteq\Cls(T)$.
\end{lemma}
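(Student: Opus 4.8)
The plan is to prove the stronger pointwise statement that each $N[y]$ is literally a cluster of $T$; the inclusion $\Ns[\rel] \subseteq \Cls(T)$ then follows at once from $\Ns[\rel] = \{N[y] \colon y \in L\}$. I emphasize that the claim must hold for \emph{every} edge-labeled tree explaining $\rel$, not merely the least-resolved one, so I will \emph{not} invoke the structural conditions of Theorem~\ref{thm:least-res}; instead I argue directly from the definition of $\rel_{(T,\lambda)}$.

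First I would fix $y \in L$ and consider the unique path from the leaf $y$ to the root, written as the chain $y = v_0 \prec\lc{T} v_1 \prec\lc{T} \cdots \prec\lc{T} v_k = \rootT{T}$ with $v_{i+1} = \parent(v_i)$. For any $x \in L \setminus \{y\}$, the vertex $\lca\lc{T}(x,y)$ is an ancestor of $y$ and hence lies on this chain; concretely $\lca\lc{T}(x,y) = v_j$, where $v_j$ is the deepest element of the chain that is an ancestor of $x$. By the definition of $\rel_{(T,\lambda)}$, one has $x \in N[y]$ if and only if the path $v_j \to v_{j-1} \to \cdots \to v_0$ carries no $1$-edge. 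The governing structural observation is that the chain is nested: if $v_i$ is an ancestor of $x$, then so is every $v_{i'}$ with $i' \geq i$, since $v_{i'} \succeq\lc{T} v_i$ and ancestry is transitive. Thus membership of $x$ in $N[y]$ depends only on where the first $1$-edge appears along the chain.

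Next I would split into two cases. If the path from $y$ to $\rootT{T}$ contains no $1$-edge, then every path from $\lca\lc{T}(x,y)$ to $y$ is $1$-edge-free, so $(x,y) \notin \rel$ for all $x$ and $N[y] = L = \Cl{T}(\rootT{T}) \in \Cls(T)$. Otherwise, let $m \geq 1$ be minimal with $(v_m, v_{m-1})$ a $1$-edge, so all edges of the chain strictly below it are $0$-edges. I claim $N[y] = \Cl{T}(v_{m-1})$ and verify both inclusions: for $x \in \Cl{T}(v_{m-1})$ the common ancestor $v_{m-1}$ forces $\lca\lc{T}(x,y) = v_j$ with $j \leq m-1$, so the path down to $y$ uses only $0$-edges and $x \in N[y]$; conversely, for $x \notin \Cl{T}(v_{m-1})$ the nestedness observation shows that no $v_i$ with $i \leq m-1$ is an ancestor of $x$, whence $j \geq m$ and the path from $\lca\lc{T}(x,y)$ to $y$ crosses the $1$-edge $(v_m, v_{m-1})$, so $x \notin N[y]$.

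The step requiring the most care is the second inclusion in the nontrivial case, namely ruling out that $\lca\lc{T}(x,y)$ drops strictly below $v_{m-1}$; this is precisely what the nestedness of the chain $v_0 \prec\lc{T} \cdots \prec\lc{T} v_k$ delivers, since $v_{m-1}$ failing to be an ancestor of $x$ precludes every deeper $v_i$ from being one. Having established $N[y] \in \Cls(T)$ for each fixed $y$, I would finally let $y$ range over $L$ to conclude $\Ns[\rel] \subseteq \Cls(T)$, which completes the proof.
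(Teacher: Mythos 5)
Your proof is correct, and it takes a genuinely different route from the paper's. The paper locates the vertex $v$ with $N[y]=\Cl{T}(v)$ \emph{top-down}: it picks a $\preceq$-maximal-depth vertex $v$ whose cluster contains $N[y]$ (i.e., the minimal enclosing cluster), uses the phylogenetic degree condition to produce a witness $x\in N[y]\setminus \Cl{T}(w)$ for a child $w$ of $v$ on the path to $y$, deduces that the path from $v$ to $y$ carries no $1$-edge, and then rules out $N[y]\subsetneq\Cl{T}(v)$ by contradiction. You instead work \emph{bottom-up}: you parametrize every possible value of $\lca\lc{T}(x,y)$ by its position on the root path of $y$, observe that membership of $x$ in $N[y]$ is governed solely by the position of the first $1$-edge on that chain, and thereby identify the relevant cluster explicitly as $\Cl{T}(v_{m-1})$ (or $\Cl{T}(\rootT{T})=L$ when no $1$-edge occurs). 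Both verifications of the two inclusions are sound; in particular your ``nestedness'' step correctly handles the only delicate point, namely that $x\notin\Cl{T}(v_{m-1})$ forces $\lca\lc{T}(x,y)=v_j$ with $j\ge m$. Your argument buys something extra: it exhibits the cluster constructively and, as a byproduct, already yields both properties of Lemma~\ref{lemma: properties of neighborhoods in trees} (no $1$-edge from $v$ to $y$, and $(\parent(v),v)$ a $1$-edge when $N[y]\neq L$), whereas the paper needs a separate argument for those; it also avoids any appeal to the degree conditions of phylogenetic trees, which the paper's minimality step uses. The paper's approach, in turn, generalizes more readily to settings where one reasons about arbitrary subsets and minimal enclosing clusters rather than root paths.
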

\begin{proof}
	Let $\rel$ be a Fitch relation and $(T = (V,E),\lambda)$ be an edge-labeled tree that explains
	$\rel$. Moreover, let $y \in L$ be chosen arbitrarily. By definition,  
	$N[y]$ contains always the vertex $y\in L$ and therefore,
	$|N[y]|\ge 1$.

	If $|N[y]| = 1$, then $N[y] = \{y\}=\Cl{T}(y) \in \Cls(T)$. Now,
	assume that $|N[y]| \ge 2$. Clearly, it holds that 
	$N[y]\subseteq L =\Cl{T}(\rootT{T})$. Therefore, we can choose a vertex $v\in
	V$ with $N[y] \subseteq \Cl{T}(v)$ such that $|\Cl{T}(v)|$ is
	minimal. Moreover, $v$ must be an inner vertex,
	since $|\Cl{T}(v)|\ge |N[y]|\ge 2$. This and $y\in \Cl{T}(v)$ imply that 
there is a vertex $w \in
	V$ with $(v,w)\in E$ and $v \prec w \preceq y$. Since $T$ is phylogenetic and
	due to the minimality of $|\Cl{T}(v)|$, we can conclude that
	$N[y]\nsubseteq \Cl{T}(w)$. The latter implies that there exists a
	vertex $x \in N[y]\setminus \Cl{T}(w) \subseteq \Cl{T}(v)$. Since $x
	\notin \Cl{T}(w)$, the leaf $x$ and the vertex $w$ are incomparable in $T$.
	The latter, together with $(v,w)\in E$ and $v \prec x$, implies that $v =
	\lca(x,w)$. Moreover, $w \preceq y$ immediately implies that $v =
	\lca(x,y)$. Since $x \in N[y]$ implies $(x,y)\notin \rel$ and
	since $(T,\lambda)$ explains $\rel$, we can conclude that there is \emph{no}
	$1$-edge on the path from $v=\lca(x,y)$ to $y$.
	
	Note, we have chosen $v$ such that $N[y] \subseteq \Cl{T}(v)$.
	Assume for contradiction that $N[y]\neq \Cl{T}(v)$. Hence, there is
	a vertex $x' \in \Cl{T}(v) \setminus N[y]$. Since $x',y\in
	\Cl{T}(v)$, we obtain $v \preceq \lca(x',y)$. Since $x' \notin N[y]$, we have $(x',y)\in \rel$. 
	This, together with the fact that	$(T,\lambda)$ explains $\rel$, implies that there is 
  a $1$-edge on the path
	from $\lca(x',y)$ to $y$. However, since $v \preceq \lca(x',y) \preceq y$,
	this $1$-edge is also contained on the path from $v$ to $y$; a contradiction. 
	Hence, $N[y] = \Cl{T}(v) \in \Cls(T)$. 
	
	Finally, since for all  $N[y] \in \Ns[\rel]$ we have 
	$N[y]\in \Cls(T)$, we can conclude that 
 	$\Ns[\rel]\subseteq \Cls(T)$. 
  \end{proof}

Using Lemma \ref{lemma: neighbors are clusters} and the fact that (every subset
of) the cluster-set $\Cls(T)$ of a phylogenetic tree $T$ is always
hierarchy-like, we immediately obtain the following
\begin{corollary}
	Every Fitch relation $\rel$ satisfies the hierarchy-like-condition (\HC). 
\end{corollary}

To show the necessity of \IC, we start with following lemma.

\begin{lemma} \label{lemma: properties of neighborhoods in trees}
	Let $\rel$ be a Fitch relation on $L$ and $(T,\lambda)$ be some edge-labeled tree that explains $\rel$. 	
	Moreover, let $y \in L$ be a leaf and chose $v \in V(T)$ such that  $N[y]	= \Cl{T}(v)$.
	Then,
	the following two properties are satisfied:
	\begin{enumerate}
		\item[(a)] There is no $1$-edge on the path from $v$ to $y$.
		\item[(b)] If $N[y] \ne L$, then $(\parent(v),v)$ is a $1$-edge.
	\end{enumerate}
\end{lemma}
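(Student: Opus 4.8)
The plan is to reduce both parts to a single elementary principle: since $(T,\lambda)$ explains $\rel$, for any leaf $z$ and any ancestor $u$ of $y$ with $\lca(z,y)=u$, membership $(z,y)\in\rel$ is equivalent to the existence of a $1$-edge on the path from $u$ to $y$. The hypothesis already hands me a vertex $v$ with $N[y]=\Cl{T}(v)$ (this is exactly what Lemma~\ref{lemma: neighbors are clusters} guarantees), so the whole argument boils down to manufacturing, via sibling subtrees, leaves whose last common ancestor with $y$ lands at the precise vertex I want to control.

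For part (a), I would first dispose of the trivial case $N[y]=\{y\}$: here $v=y$, the path from $v$ to $y$ is empty, and there is nothing to prove. Otherwise $|N[y]|\ge 2$, so $|\Cl{T}(v)|\ge 2$ forces $v$ to be an inner vertex, and since $T$ is phylogenetic, $v$ has at least two children. Exactly one child lies on the path from $v$ to the leaf $y$; I then pick any leaf $x$ in a different child-subtree of $v$. This yields $x\in\Cl{T}(v)=N[y]$, hence $(x,y)\notin\rel$, and simultaneously $\lca(x,y)=v$ because $x$ and $y$ lie in distinct subtrees below $v$. Since $(T,\lambda)$ explains $\rel$, the absence of $(x,y)$ from $\rel$ means there is no $1$-edge on the path from $v=\lca(x,y)$ to $y$, which is precisely (a).

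For part (b), the hypothesis $N[y]=\Cl{T}(v)\ne L=\Cl{T}(\rootT{T})$ gives $v\ne\rootT{T}$, so $u\coloneqq\parent(v)$ exists. I would run the same sibling construction one level higher: $u$ is an inner vertex with at least two children, one of which is $v$, so I can choose a leaf $z$ in a sibling subtree of $v$ under $u$. Then $z\notin\Cl{T}(v)=N[y]$, whence $(z,y)\in\rel$, and $\lca(z,y)=u$ since $z$ and $y$ sit in distinct child-subtrees of $u$. As $(T,\lambda)$ explains $\rel$, some $1$-edge lies on the path from $u$ to $y$. That path decomposes as the single edge $(u,v)$ followed by the path from $v$ to $y$, and by part (a) the latter contains no $1$-edge; hence the $1$-edge must be $(u,v)=(\parent(v),v)$, establishing (b).

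The only step demanding real care — and the one I would double-check — is the sibling-subtree construction, since it presupposes that $v$ (respectively $u$) has at least two children. This is exactly where the phylogenetic hypothesis earns its keep: the root has degree at least $2$ and every other inner vertex has degree at least $3$, so each inner vertex possesses at least two children and the required sibling subtree always exists. Everything else is a direct appeal to the ``explains'' condition and to part (a), so I do not anticipate any further obstacle.
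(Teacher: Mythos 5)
Your proof is correct and follows essentially the same route as the paper: both dispose of the trivial case $v=y$, then use the phylogenetic property to produce a leaf $z\in\Cl{T}(v)$ with $\lca(z,y)=v$ for part (a), and a leaf outside $\Cl{T}(v)$ but inside $\Cl{T}(\parent(v))$ for part (b), concluding via the ``explains'' condition and part (a). Your explicit justification of the sibling-subtree step via the degree conditions is a slightly more detailed rendering of what the paper asserts directly.
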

\begin{proof} 	
	Let $\rel$ be a Fitch relation on $L$,  $(T = (V,E),\lambda)$ be an edge-labeled tree that explains
	$\rel$ and $y \in L$ be an arbitrary leaf. 
	Lemma \ref{lemma: neighbors are clusters} implies that we can choose a  vertex $v\in
	V$ with $N[y] = \Cl{T}(v)$. 
	Clearly, if $y=v$, then the lemma is trivially satisfied. Hence, assume that $v\neq y$. 

	Obviously, $v$ is an ancestor of $\lca(x,y)$ for every $x\in \Cl{T}(v) =
	N[y]$. Moreover, since $T$ is phylogenetic, there is a vertex $z \in
	\Cl{T}(v)=N[y]$ with $\lca(z,y)=v$. Since $z\in N[y]$ we have $(z,y)\notin \rel$. 
	This and the fact that $(T,\lambda)$ explains $\rel$ imply
	that the path from $\lca(z,y)=v$ to $y$ does not
	contain a $1$-edge, which
	proves Property (a).

	We continue with showing Property (b). Since $N[y] = \Cl{T}(v)$ and 
	$N[y]\neq L$, we can conclude that $v \neq \rootT{T}$.
	Since $v \ne \rootT{T}$ and $T$ is phylogenetic, there must be a parent
	$\parent(v)$ of $v$ and a vertex $x' \in \Cl{T}(\parent(v)) \setminus
	\Cl{T}(v)$. Hence, $x' \notin \Cl{T}(v)= N[y]$ and thus, 
	 $(x',y) \in \rel$. By the choice of $x'$, we have
	$\lca(x',y)=\parent(v)$. Since $(T,\lambda)$ explains $\rel$, we can conclude
	that there is a $1$-edge along the path from
	$\lca(x',y)=\parent(v)$ to $y$. Moreover, Property (a) implies that there is
	\emph{no} $1$-edge along the path from $v$ to $y$. Taken the latter arguments
	together, $(\parent(v),v)$ must be a $1$-edge.
  \end{proof}

\begin{lemma}
	Every Fitch relation $\rel$ satisfies the inequality-condition (\IC). 
\end{lemma}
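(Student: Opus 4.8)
The plan is to fix, once and for all, an edge-labeled tree $(T,\lambda)$ that explains $\rel$; such a tree exists because $\rel$ is a Fitch relation, and since \IC is a purely relational condition it suffices to verify it using this fixed tree. I would take an arbitrary neighborhood $N = N[z] \in \Ns[\rel]$ together with an arbitrary element $y \in N$, and aim to show $|N[y]| \le |N|$. By Lemma~\ref{lemma: neighbors are clusters}, both neighborhoods are clusters of $T$, so I can pick vertices $v, w \in V(T)$ with $N[z] = \Cl{T}(v)$ and $N[y] = \Cl{T}(w)$. The first observation is that $v$ and $w$ are comparable in $T$: since $y \in N[z] = \Cl{T}(v)$ we have $v \preceq y$, and since $y \in N[y] = \Cl{T}(w)$ we have $w \preceq y$, so both $v$ and $w$ lie on the unique path from $\rootT{T}$ to $y$.

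The next step reduces the inequality to a nesting statement about these two vertices. If $v \preceq w$, then $\Cl{T}(w) \subseteq \Cl{T}(v)$, and hence
\[
  |N[y]| = |\Cl{T}(w)| \le |\Cl{T}(v)| = |N[z]| = |N|,
\]
which is exactly \IC (the case $v = w$ is included here and yields equality). Thus the whole claim comes down to excluding the remaining possibility $w \prec v$.

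The heart of the argument — and the one step that genuinely uses the structural lemma — is ruling out $w \prec v$. Assume it holds. Then $v$ has a strict ancestor, so $v \ne \rootT{T}$ and consequently $N[z] = \Cl{T}(v) \ne L$. Lemma~\ref{lemma: properties of neighborhoods in trees}(b), applied to $z$, then guarantees that the inner edge $(\parent(v), v)$ is a $1$-edge. On the other hand, $w \prec v$ yields $w \preceq \parent(v) \prec v \preceq y$, so the edge $(\parent(v), v)$ lies on the path from $w$ to $y$. But Lemma~\ref{lemma: properties of neighborhoods in trees}(a), applied to $y$ (recall $N[y] = \Cl{T}(w)$), states that this path carries \emph{no} $1$-edge — a contradiction. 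Hence $w \prec v$ is impossible, $v \preceq w$ holds, and \IC follows.

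I expect the main obstacle to be conceptual rather than computational: recognizing that \IC is equivalent to the nesting $N[y] \subseteq N[z]$ whenever $y \in N[z]$, and then reading off from Lemmas~\ref{lemma: neighbors are clusters} and~\ref{lemma: properties of neighborhoods in trees} that the vertices defining the two clusters must be correctly ordered along the path to $y$. The only point requiring real care is checking that the $1$-edge $(\parent(v),v)$ genuinely sits on the path from $w$ to $y$, which rests on the inequality $w \preceq \parent(v)$ that is immediate from $w \prec v$.
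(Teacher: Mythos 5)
Your proof is correct and follows essentially the same route as the paper: reduce \IC to the nesting $N[y]\subseteq N$ via Lemma~\ref{lemma: neighbors are clusters}, and exclude $w\prec v$ by playing parts (a) and (b) of Lemma~\ref{lemma: properties of neighborhoods in trees} against each other on the $1$-edge $(\parent(v),v)$. The only cosmetic difference is that you obtain comparability of $v$ and $w$ from both being ancestors of $y$ rather than from the hierarchy property of $\Cls(T)$, and you fold the paper's separate $v=\rootT{T}$ case into the $v\preceq w$ branch; both are fine.
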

\begin{proof}
	Let $\rel$ (on $L$) be a Fitch relation and $(T=(V,E),\lambda)$ be an edge-labeled tree that
	explains $\rel$. In order to prove the statement we need to show that
	$|N[y]|\le |N|$ is satisfied for every $y' \in L$ and every $y \in N\coloneqq N[y']$. 
	
	Let $y' \in L$ and let $y \in N \coloneqq N[y']$. 
	Clearly, if $y=y'$, then $|N[y]| =  |N|$. Thus, assume that $y\neq y'$.
	Lemma \ref{lemma: neighbors are clusters} implies that $N \in	\Cls(T)$. 
	Hence, there is a vertex $v \in V$ such that $\Cl{T}(v)= N$. Since
	$y \in N = \Cl{T}(v)$, we can conclude that $v$ is an ancestor of $y$ in $T$.

	In what follows, we distinguish two mutually exclusive cases, either $v =
	\rootT{T}$ or $v \ne \rootT{T}$. If $v = \rootT{T}$, then $N = L$. Hence,
	$N[y] \subseteq L = N$ and therefore $|N[y]| \le |N|$.
	Now, assume that $v \ne \rootT{T}$ and therefore, $N = \Cl{T}(v)\neq L$.
	Again, Lemma \ref{lemma: neighbors are clusters} implies that 
	$N[y] \in \Cls(T)$ is a cluster. Hence, there is a
	vertex $w \in \Cls(T)$ such that $N[y] = \Cl{T}(w)$. Note that we
	have by definition $y \in N[y] = \Cl{T}(w)$. This, together with $y
	\in N = \Cl{T}(v)$, implies that $y \in \Cl{T}(v)\cap \Cl{T}(w)$. 
	Since $\Cl{T}(v)\cap \Cl{T}(w)\neq \emptyset$ and $\Cls(T)$ is a hierarchy, we
	can conclude that $\Cl{T}(v)\cap \Cl{T}(w) \in \{\Cl{T}(v),\Cl{T}(w)\}$.
	The latter 	implies 
	either $v\preceq w$ or $w \prec v$.
	
	First, assume that $w \prec v$.
	Since we have $\Cl{T}(v) =N \ne L$, we can apply Lemma \ref{lemma:
	properties of neighborhoods in trees} (b) to conclude that 
  $\lambda(\parent(v),v) =1$. 
	 This, together with $w \prec v$, implies that there is
	a $1$-edge on the path from $w$ to $y$;
	a contradiction to Lemma \ref{lemma: properties	of neighborhoods in trees} 
	(a) applied on $N[y] = \Cl{T}(w)$.
	Hence, $w \prec v$ is not possible, and therefore, it must hold that $v\preceq w$. 
	Thus, we have $N[y]=\Cl{T}(w)\subseteq \Cl{T}(v) = N$, and therefore, we obtain
	$|N[y]| \le |N|$. 
  \end{proof}

To show that \HC and \IC are also sufficient for Fitch relations, we first 
define an edge-labeled tree $\To{\rel} = (T, \lambda)$ based on $\Ns[\rel]$
and prove that $\To{\rel}$ explains $\rel$.   

\begin{definition}
	Let $\rel$ be a relation on $L$ that satisfies $\HC$. 
	The edge-labeled tree $\To{\rel} = (T,\lambda)$  has the following cluster set
	\[\Cls(T) = \Ns[\rel] \cup \big\{L\big\} \cup \big\{ \{x\} \colon x \in L \big\} \]
	and 	each edge $(\parent(v),v)$ of $T$ obtains the label
	\[	\lambda(\parent(v),v) =1 \iff  \Cl{T}(v)\in \Ns[\rel]\text{, i.e., there is some }  y\in L 
														 \text{ with } N[y]=\Cl{T}(v).
	\] 
	\label{def:tree-for-rel}
\end{definition}

We emphasize that all non-trivial clusters of $\To{\rel}$ are provided by the elements in $\Ns[\rel]$. 
Now, we are in the position to show that \HC and \IC are sufficient for Fitch relations.

\begin{lemma}
	\label{lem:C:T:Sufficiency}
		Let  $\rel$ be a relation on $L$  	that satisfies $\HC$ and $\IC$. 
	 Then, 	$\To{\rel}$ is well-defined. Moreover, $\To{\rel}$ explains $\rel$, and thus, 	
	$\rel$ is a Fitch relation.	
\end{lemma}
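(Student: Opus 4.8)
The plan is to verify the two assertions of the lemma in turn: first that $\To{\rel}$ is a well-defined edge-labeled tree, and then that it explains $\rel$; the Fitch property is then immediate, since by definition a relation explained by an edge-labeled tree is a Fitch relation. For well-definedness I would only need $\HC$. The proposed cluster set $\mc H \coloneqq \Ns[\rel] \cup \{L\} \cup \{\{x\} \colon x \in L\}$ is a hierarchy on $L$: it contains $L$ and all singletons by construction, and it is hierarchy-like because $\Ns[\rel]$ is hierarchy-like by $\HC$, while $L$ and the singletons are trivially compatible with every subset of $L$. Theorem \ref{thm:hierarchy-tree} then supplies a (unique) phylogenetic tree $T$ with $\Cls(T) = \mc H$, and the labeling is unambiguous since distinct vertices of $T$ carry distinct clusters, so the condition $\Cl{T}(v)\in\Ns[\rel]$ determines $\lambda(\parent(v),v)$ on each inner edge.

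For the explanation claim the first step is to translate everything into cluster language. For distinct $x,y\in L$ the definition of $N[y]$ gives $(x,y)\in\rel \iff x\notin N[y]$. I would fix, for each $y$, the unique vertex $v_y$ with $\Cl{T}(v_y)=N[y]$ (it exists since $N[y]\in\Ns[\rel]\subseteq\Cls(T)$), and note that $y\in N[y]$ forces $v_y\preceq y$. The key geometric observation to establish is the equivalence
\[
	x\notin\Cl{T}(v_y) \iff \lca(x,y)\prec v_y ,
\]
which holds because $v_y$ and $\lca(x,y)$ both lie on the path from $\rootT{T}$ to $y$ and are hence comparable. It then remains to prove that there is a $1$-edge on the path from $\lca(x,y)$ to $y$ precisely when $\lca(x,y)\prec v_y$.

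The forward direction is short: if $\lca(x,y)\prec v_y\preceq y$, then $v_y$ is not the root, the edge $(\parent(v_y),v_y)$ lies on the path from $\lca(x,y)$ to $y$, and since $\Cl{T}(v_y)=N[y]\in\Ns[\rel]$ it is a $1$-edge by construction. The converse is where the inequality-condition enters, and I expect it to be the main obstacle. Suppose $(\parent(w),w)$ is a $1$-edge on the path from $\lca(x,y)$ to $y$, so $w\preceq y$ and, by the labeling rule, $\Cl{T}(w)=N[z]$ for some $z$. Then $y\in\Cl{T}(w)=N[z]$, and since also $y\in N[y]$, $\HC$ forces $N[y]$ and $N[z]$ to be comparable. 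The dangerous possibility is the wrong inclusion $N[z]\subsetneq N[y]$, which would place $v_y$ strictly above $w$ and destroy the argument; to exclude it I would apply $\IC$ with $N=N[z]$ and the element $y\in N[z]$, obtaining $|N[y]|\le|N[z]|$ and hence ruling out $N[z]\subsetneq N[y]$. Therefore $N[y]\subseteq N[z]$, i.e.\ $w\preceq v_y$, so $\lca(x,y)\preceq\parent(w)\prec w\preceq v_y$ gives $\lca(x,y)\prec v_y$, as required. The only delicate points I anticipate are the comparability bookkeeping in the tree and the precise role of $\IC$ in eliminating this bad case; the remaining steps are routine.
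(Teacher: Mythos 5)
Your proposal is correct and follows essentially the same route as the paper: well-definedness from \HC via Theorem \ref{thm:hierarchy-tree}, the forward direction via the $1$-edge $(\parent(v_y),v_y)$ above the cluster $N[y]$, and the converse by using \HC for comparability of $N[y]$ with the cluster of the $1$-edge and \IC to exclude the strict containment $N[z]\subsetneq N[y]$. The only (cosmetic) difference is your reformulation of membership in $\rel$ as the order-theoretic condition $\lca(x,y)\prec v_y$; also note that the labeling rule applies to all edges $(\parent(v),v)$, not only inner ones, though your uniqueness argument covers that case verbatim.
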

\begin{proof}
	Since $\rel$ satisfies \HC, the set $\Ns[\rel]$ is hierarchy-like. Hence, 
	$\Cls(T)$ as in Def.\ \ref{def:tree-for-rel} is indeed a hierarchy. This, together 
	with Theorem \ref{thm:hierarchy-tree}, implies that 
	$T$ is well-defined. In particular, $T$ is a phylogenetic tree on $L$.
	The edge-labeling $\lambda: E(T) \to \{0,1\}$ in Def.\ \ref{def:tree-for-rel} is
	only based on the existence of some $y\in L$ with $N[y]=\Cl{T}(v)$, and thus, is well-defined as well.
	In summary, the edge-labeled tree 	$\To{\rel}=(T=(V,E),\lambda)$ is well-defined.

	Now, we prove
	that $\To{\rel}$ explains $\rel$. To this end, we will show that 
	$(x,y)\in \rel$ if and only if there is a 1-edge on the path from 
	$\lca(x,y)$ to $y$ in $\To{\rel}$.

	First, suppose that $(x,y)\in \rel$. 
	By definition $N[y]\in \Ns[\rel]$ and, by construction of 
	$T$, we have	$N[y]\in \Ns[\rel] \subseteq \Cls(T)$.
	Thus, there is some vertex $v\in V$ with $N[y] = \Cl{T}(v)$.
	Since $(x,y)\in \rel$, we have 	$x \notin N[y]$ and therefore $N[y] \ne L$.
	The latter implies that $v\neq \rootT{T}$, and hence, $\parent(v)$ exists.
	By construction of $\lambda$, we have $\lambda(\parent(v),v) = 1$. 
	Now,  $x \notin N[y] = \Cl{T}(v)$ and  $y\in N[y] = \Cl{T}(v)$ 
	imply
	that $\lca(x,y) \preceq \parent(v)$. Hence, 
	the 1-edge $(\parent(v),v)$ is located on the path from 
	$\lca(x,y)$ to $y$. 

	Conversely, assume that 
	 $x,y\in L$ are distinct vertices
	such that the path from $\lca(x,y)$ to $y$ contains 
	a 1-edge $(\parent(v),v)$. 
	Hence, by construction of $\lambda$, 
	we have $\Cl{T}(v) \in \Ns[\rel]$.

	We continue to show that $N[y] \subseteq \Cl{T}(v)$. 
	Since $v$ is located on the path from $\lca(x,y)$ to $y$, we have $v\preceq y$, and thus, $y\in \Cl{T}(v)$.
	Since $\rel$ satisfies \IC and $y\in N\coloneqq \Cl{T}(v) \in \Ns[\rel]$ it must hold that
	$|N[y]|\leq |N| = |\Cl{T}(v)|$. 
	Moreover, since $\rel$ satisfies \HC, the set $\Ns[\rel]$ is hierarchy-like. 
	Since $\Cl{T}(v), N[y] \in \Ns[\rel]$ and $y\in \Cl{T}(v)\cap  N[y]$, 
	we have either   $\Cl{T}(v)\subsetneq N[y]$ or $N[y] \subseteq \Cl{T}(v)$.
  However, $|N[y]|\leq | \Cl{T}(v)|$ immediately implies that 
	$N[y] \subseteq \Cl{T}(v)$ must hold. 

	Furthermore, since 
	$(\parent(v),v)$ is located on the path from $\lca(x,y)$ to $y$, 
	we can conclude that $x\notin \Cl{T}(v)$. This and $N[y] \subseteq \Cl{T}(v)$
	imply
	that $x\notin N[y]$. Hence, the definition of  $N[y]$ implies that $(x,y)\in \rel$. 

	To summarize, for any two distinct vertices $x,y\in L$ we have
	$(x,y) \in \rel$ if and only if there is a 1-edge on the path from 
	$\lca(x,y)$ to $y$ in $\To{\rel}$. Therefore, $\To{\rel}$ explains $\rel$. 
	Hence, $\rel$ is a Fitch relation. 
  \end{proof}

As a consequence of the results above, we obtain the following new characterization. 
\begin{theorem}
	\label{thm:new-characterization}
	A relation $\rel$ is a Fitch relation if and only if $\rel$ satisfies $\HC$ and $\IC$. 
\end{theorem}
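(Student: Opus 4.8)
The plan is to recognize that Theorem~\ref{thm:new-characterization} is precisely the conjunction of the results established above, so the proof reduces to assembling the two implications of the biconditional from the preceding lemmas and corollary; I would not expect to introduce any new argument here.

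For the forward direction, I would assume that $\rel$ is a Fitch relation and simply invoke the Corollary showing that every Fitch relation satisfies \HC, together with the Lemma showing that every Fitch relation satisfies \IC. Both rest on Lemma~\ref{lemma: neighbors are clusters} (giving $\Ns[\rel]\subseteq\Cls(T)$) and Lemma~\ref{lemma: properties of neighborhoods in trees}, and together they immediately yield that $\rel$ satisfies both conditions.

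For the converse, I would assume that $\rel$ satisfies \HC and \IC and apply Lemma~\ref{lem:C:T:Sufficiency} verbatim: it guarantees that $\To{\rel}$ is well-defined and explains $\rel$, so that $\rel$ is by definition a Fitch relation. Combining the two directions then closes the biconditional.

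Since the genuine work has already been carried out in the supporting lemmas, there is essentially no obstacle remaining at this stage. The substantive step lives in the sufficiency direction of Lemma~\ref{lem:C:T:Sufficiency}, where one must check that the explicitly constructed tree $\To{\rel}$ reproduces $\rel$: there, \HC ensures that $\Ns[\rel]$ extends to a genuine hierarchy and hence (via Theorem~\ref{thm:hierarchy-tree}) to a well-defined phylogenetic tree, while \IC is exactly what forces the inclusion $N[y]\subseteq\Cl{T}(v)$ that lets one recover $(x,y)\in\rel$ from a $1$-edge on the path to $y$. If I were to anticipate a pitfall, it would be only the bookkeeping that neither \HC nor \IC alone suffices — the example $\rel=\{(c,b)\}$ discussed before Definition~\ref{def:tree-for-rel} confirms that \HC can hold while \IC fails, so both conditions are genuinely needed.
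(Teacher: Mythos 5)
Your proposal is correct and matches the paper exactly: the paper states Theorem~\ref{thm:new-characterization} as an immediate consequence of the preceding Corollary (\HC is necessary), the Lemma establishing \IC, and Lemma~\ref{lem:C:T:Sufficiency} (sufficiency), which is precisely the assembly you describe. No further argument is needed.
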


For the sake of completeness we show that $\To{\rel}$ is 
least-resolved w.r.t.\ a Fitch relation $\rel$.

\begin{proposition}
	The tree $\To{\rel}$ is the unique least-resolved tree that explains 	
	the Fitch relation $\rel$. 	
\end{proposition}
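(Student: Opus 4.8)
The plan is to reduce the statement to the characterization of least-resolved trees provided by Theorem~\ref{thm:least-res}. By Lemma~\ref{lem:C:T:Sufficiency} we already know that $\To{\rel}$ explains the Fitch relation $\rel$, so it only remains to verify that $\To{\rel}$ meets condition~(\ref{it:1}) of Theorem~\ref{thm:least-res}: (a) every inner edge of $\To{\rel}$ is a $1$-edge, and (b) every inner edge $(\parent(v),v)$ is accompanied by an outer $0$-edge $(v,x)$. Once (a) and (b) are established, Theorem~\ref{thm:least-res} yields that $\To{\rel}$ is least-resolved, and its ``moreover''-part---together with the fact that, by Theorem~\ref{thm:hierarchy-tree}, $\To{\rel}$ is determined up to isomorphism by its cluster set---shows that $\To{\rel}$ is \emph{the} unique least-resolved tree explaining $\rel$.

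Throughout I will work with the explicit data of Definition~\ref{def:tree-for-rel}, namely $\Cls(T)=\Ns[\rel]\cup\{L\}\cup\{\{x\}\colon x\in L\}$ and the rule $\lambda(\parent(v),v)=1\iff\Cl{T}(v)\in\Ns[\rel]$. Part~(a) is then immediate: if $(\parent(v),v)$ is an inner edge, then $v$ is a non-root inner vertex, so $\Cl{T}(v)$ is neither $L$ nor a singleton; since every cluster of $T$ lies in $\Ns[\rel]\cup\{L\}\cup\{\{x\}\}$, this forces $\Cl{T}(v)\in\Ns[\rel]$, and hence $\lambda(\parent(v),v)=1$.

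The substantive step is part~(b). Fix an inner edge $(\parent(v),v)$; as above $v$ is a non-root inner vertex with $\Cl{T}(v)\in\Ns[\rel]$, so I may choose a leaf $y$ with $N[y]=\Cl{T}(v)$. Since $\To{\rel}$ explains $\rel$ and $\rel$ is a Fitch relation, Lemma~\ref{lemma: properties of neighborhoods in trees}(a) applies to the pair $y$, $v$ and guarantees that the path from $v$ to $y$ carries no $1$-edge. Let $w$ be the unique child of $v$ on this path, so that $\lambda(v,w)=0$. The key claim is that $w=y$: otherwise $w\prec y$, so $w$ is a non-root inner vertex (it has the leaf $y\neq w$ below it), whence $\Cl{T}(w)$ is neither $L$ nor a singleton, giving $\Cl{T}(w)\in\Ns[\rel]$ and therefore $\lambda(v,w)=1$---contradicting $\lambda(v,w)=0$. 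Consequently $y$ is a child of $v$, and $(v,y)$ is an outer edge with $\lambda(v,y)=0$, i.e.\ precisely the outer $0$-edge demanded by~(b).

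I expect the identification of the outer $0$-edge to be the main obstacle, since a priori the leaf $y$ realizing $N[y]=\Cl{T}(v)$ could lie arbitrarily deep in the subtree rooted at $v$. What makes the argument go through is the rigidity of the construction: by part~(a) every internal vertex strictly below $v$ is joined to its parent by a $1$-edge, so the $0$-labeled descent from $v$ to $y$ furnished by Lemma~\ref{lemma: properties of neighborhoods in trees}(a) cannot pass any internal vertex and must stop at a leaf after a single edge. Having verified (a) and (b), the proposition follows directly from Theorem~\ref{thm:least-res}.
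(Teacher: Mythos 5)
Your proof is correct and follows essentially the same route as the paper: both establish that all inner edges of $\To{\rel}$ are $1$-edges directly from the construction of $\lambda$, then invoke Lemma~\ref{lemma: properties of neighborhoods in trees}(a) to force the $0$-labeled path from $v$ to the leaf $y$ with $N[y]=\Cl{T}(v)$ to collapse to a single outer edge, and finally appeal to Theorem~\ref{thm:least-res} for both least-resolvedness and uniqueness. Your explicit argument that the child $w$ on that path must equal $y$ merely spells out a step the paper states more tersely.
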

\begin{proof}
	By construction of $\Cls(T)$ all non-trivial clusters
	of $T$ are provided by $\Ns[\rel]$. Hence, for 
	each non-trivial cluster $\Cl{T}(v)$ of $T$ there is
	a vertex $y\in L$ with $N[y]=\Cl{T}(v)$. 
	The latter and the construction of $\lambda$  
	imply
	that all inner edges of  $\To{\rel}$ are 1-edges.
	Moreover, since $\To{\rel}$ explains $\rel$, we can apply
	Property (a) of Lemma \ref{lemma: properties of neighborhoods in trees}
	and conclude that there is no $1$-edge on the path from $v$ to $y$.
	However, since all inner edges of $\To{\rel}$ are 1-edges, 
	the path from $v$ to $y$ is simply the edge $(v,y)$. 
	Hence, $(v,y)$ is an outer 0-edge. Therefore, each inner vertex $v\neq \rootT{T}$
	is incident to an outer 0-edge. Now, we can apply 
	Theorem \ref{thm:least-res} to conclude that 
	$\To{\rel}$ is the unique least-resolved tree that explains $\rel$.
 \end{proof}

\subsection{Characterization via Three-Vertex Subrelations}

Theorem \ref{thm:Geiss-char} provides a characterization in terms of
forbidden and allowed triangles. All allowed triangles $\Delta\in \{A_1,A_2,\dots, A_8\}$ share a common
property, namely if $\Delta$ contains an edge  $(c,b)$ but no edge $(a,b)$, 
then $(c,a)$ must be an edge in $\Delta$ and either 
both $(a,c),(b,c)$ are an edge in $\Delta$ or neither of them is an edge in $\Delta$. 
This, in fact, characterizes allowed triangles as shown in the next lemma. 

\begin{lemma} \label{M:L:charact. valid triangles}
	Let $\rel$ be a relation on $L$. Then, the following two conditions
	are equivalent:
	\begin{enumerate}
		\item For every $\{x,y,z\} \in {L\choose 3}$ the subgraph of $\rel$
					that is induced by $\{x,y,z\}$ is isomorphic to one of the 
					allowed triangles $A_1,A_2, \dots ,A_8$.
		\item for every $\{a,b,c\} \in {L\choose 3}$ 
					with $(c,b)\in \rel$ and $(a,b) \notin\rel$, 
					we have $(c,a)\in\rel$ and either $(a,c),(b,c)\in \rel$ or $(a,c),(b,c)\notin \rel$ .
	\end{enumerate}
\end{lemma}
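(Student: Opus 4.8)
The plan is to exploit that both conditions are \emph{local}: each is a conjunction, over all three-element subsets $S\in{L\choose 3}$, of a property of the induced subrelation $\rel[S]$ that is invariant under relabelling of $S$. Condition~(1) says that for every such $S$ the induced subgraph $\rel[S]$ is isomorphic to one of $A_1,\dots,A_8$. Reading the quantifier ``for every $\{a,b,c\}\in{L\choose 3}$'' in condition~(2) as ranging over a three-set \emph{together with} an assignment of the names $a,b,c$ to its elements, condition~(2) says that for every $S$ and every such assignment the displayed implication holds. As neither property can be influenced by elements of $L$ outside $S$, the equivalence $(1)\Leftrightarrow(2)$ reduces to the following statement for a single fixed three-element set: $\rel[S]$ is isomorphic to an allowed triangle if and only if every labelling of $S$ satisfies the implication of~(2). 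First I would make this reduction precise, and then establish the two implications for a three-vertex relation.

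For $(1)\Rightarrow(2)$ it suffices --- since the implication in~(2) is itself relabelling-invariant --- to verify it for each of the eight allowed triangles $A_1,\dots,A_8$ of Fig.~\ref{fig:triangles}. This is precisely the common property isolated in the paragraph preceding the lemma, so the forward direction is a short inspection: in each allowed pattern one checks that whenever $(c,b)$ is an edge and $(a,b)$ a non-edge, the edge $(c,a)$ is present and $(a,c),(b,c)$ are either both present or both absent.

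For the converse $(2)\Rightarrow(1)$ I would argue contrapositively. If $\rel[S]$ is not allowed, it is isomorphic to one of the forbidden triangles $F_1,\dots,F_8$, and for each of these I would exhibit a labelling $a,b,c$ witnessing a failure of~(2): one with $(c,b)\in\rel$ and $(a,b)\notin\rel$ but with $(c,a)\notin\rel$, or with exactly one of $(a,c),(b,c)$ in $\rel$. For example $F_1=\{(c,b)\}$ fails with the evident labelling, since the consequence would demand the absent edge $(c,a)$; the remaining forbidden triangles are dispatched the same way.

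The only genuine obstacle is that ``allowed triangle'' is defined extensionally through Fig.~\ref{fig:triangles} rather than by an intrinsic criterion, so both implications ultimately rest on a finite inspection of the sixteen isomorphism types of three-vertex relations, and the task is to present this transparently rather than tediously. Two facts keep it short: there are exactly these sixteen types, split by Fig.~\ref{fig:triangles} into eight allowed and eight forbidden ones, so the verification is finite; and both the ``allowed'' property and the implication in~(2) are isomorphism-invariant, so one representative per type suffices, while the premise --- which designates $b$ as the unique target receiving the edge $(c,b)$ but not $(a,b)$ --- fixes the roles of $a,b,c$ enough that only a handful of labellings need be examined in each case. I expect no conceptual difficulty beyond organizing these cases cleanly.
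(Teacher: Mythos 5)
Your proposal is correct and follows essentially the same route as the paper: both reduce the equivalence to a finite, isomorphism-invariant inspection of the sixteen three-vertex relation types, checking that every labelling of each allowed triangle satisfies the implication in (2) (the paper phrases this contrapositively, ruling out each $A_i$ when (2) fails) and exhibiting a violating labelling for each forbidden triangle. No gap.
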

\begin{proof}
	Let $\rel$ be a relation on $L$.
	Recap that $\rel[\{u,v,w\}]$ denotes the subgraph of $\rel$ that is induced by the vertices $u,v,w\in L$. 
	In this proof, we use the labels of the vertices for the allowed and forbidden triangles 
	as shown in Fig.\ \ref{fig:exmpl}.
	
	Assume for contraposition that Condition (2) is not satisfied.
	Hence, there is a subset $\{a,b,c\} \in {L\choose 3}$ 
	such that $(c,b)\in \rel$ and $(a,b) \notin \rel$, but 
	\begin{itemize}
	\item[(i)] $(c,a) \notin \rel$ or
	\item[(ii)] either $(a,c)\in \rel $ and $ (b,c)\notin \rel$ or $(a,c)\notin \rel$ and $ (b,c)\in \rel$.
	\end{itemize}
	Put $\Delta\coloneqq \rel[\{a,b,c\}]$. Since  $(c,b)\in \rel$ and $(a,b) \notin \rel$, 
	the in-degree of $b$ in $\Delta$ must be one. Since none of the graphs $A_1,A_2,A_3$ and $A_4$
	contains a vertex with in-degree one, $\Delta$ cannot be isomorphic to $A_1,A_2,A_3$ or $A_4$.
	
	Assume for contradiction that $\Delta$ is isomorphic to $A_5$. 
	Since $A_5$	contains only one vertex with in-degree one  namely $y$, we obtain
	$b=y$. 
	Moreover, $(c,b)\in \rel$ and $(a,b)\notin \rel$ imply
	$x=a$ and $z=c$.
	Since $(z,x)\in E(A_5)$, we have $(c,a)\in E(\Delta)\subseteq \rel$. Hence, Case (i) cannot be satisfied. 
	Now, $(y,z),(x,z)\notin E(A_5)$ implies that $(b,c),(a,c)\notin E(\Delta)\subseteq \rel$, and therefore, Case (ii) cannot be satisfied. 
	The latter two arguments lead to a contradiction, since at least one of the  Cases (i) or (ii) has to be satisfied.
	Thus,  $\Delta$ cannot be isomorphic to $A_5$.

	Assume for contradiction that $\Delta$ is isomorphic to $A_6$. 
	Analogously as in the case for $A_5$, we have $b=y$, $x=a$ and $z=c$
	and, by similar arguments, we obtain that $\Delta$ cannot be isomorphic to $A_6$.

	Assume for contradiction that $\Delta$ is isomorphic to $A_7$ or $A_8$.
	For both graphs, $(c,b)\in  \rel$ and $(a,b) \notin \rel$ imply 
	$z=c$.
	Since $b$ has in-degree one in $\Delta$, we observe
	that $b\in\{x,y\}$ for both graphs. 
	Due to symmetry, we can w.l.o.g.\ choose $a=x$ and $b=y$. 
	Since $(z,x) \in E(A_7)\cap E(A_8)$, we have $(c,a) \in E(\Delta)\subseteq \rel$. Hence, Case (i) $(c,a) \notin \rel$ is not possible.
	If $\Delta$ is isomorphic to $A_7$, then we have $(a,c),(b,c)\in E(\Delta)\subseteq \rel$, since $(x,z),(y,z) \in E(A_7)$. 
	Now, if $\Delta$ is isomorphic to $A_8$, then we have $(a,c),(b,c)\notin E(\Delta)\subseteq \rel$, since $(x,z),(y,z) \notin E(A_8)$. 
	Again, 	the latter arguments lead to a contradiction, since at least one of the  Cases (i) or (ii) has to be satisfied.
  Thus, $\Delta$ cannot be isomorphic to $A_7$ or $A_8$.

	Thus, if Condition (2) is not satisfied, then $\Delta$ cannot be isomorphic to 
	one of $A_1,\dots,A_8$, and therefore, Condition (1) is not satisfied.

	By contraposition, assume that Condition (1) is not satisfied. Hence, there is
	a subset $\{a,b,c\} \in {L\choose 3}$, such that the induced subgraph
	$\rel[\{a,b,c\}]$ is not isomorphic to $A_1,\ldots,A_7$ or
	$A_8$; and therefore, $\rel[\{a,b,c\}]$ is isomorphic to one of
	$F_1,\ldots,F_8$. First, we observe that for every forbidden triangle $F_1$ to
	$F_8$ the vertices are labeled such that $(c,b)\in \rel$ and $(a,b)\notin \rel$, 
	see Figure \ref{fig:exmpl}. We also observe that
	$(c,a) \notin \rel$ for $F_1,F_2,\ldots,F_6$ and $F_7$, and that
	$(a,c)\notin \rel$ and  $(b,c)\in \rel$ for $F_8$.

	Either way, we have found a subset $\{a,b,c\} \in {L\choose 3}$
	such that $(c,b)\in \rel$ and
	$(a,b) \notin \rel$, but $(c,a) \notin \rel$ or
	$(a,c)\notin \rel$ and  $(b,c)\in \rel$.
	 Thus, Condition (2) is not satisfied.	
  \end{proof}

Based on  Theorem \ref{thm:Geiss-char} and Lemma \ref{M:L:charact. valid triangles} 
we obtain the following new characterization of Fitch relations. 
\begin{theorem}
	A relation $\rel$ on $L$ is a
	Fitch relation if and only if for every subset $\{a,b,c\} \in {L\choose 3}$
 	with $(c,b)\in \rel$ and $(a,b) \notin\rel$, 
	we have $(c,a)\in\rel$ and either $(a,c),(b,c)\in \rel$ or $(a,c),(b,c)\notin \rel$ .
\end{theorem}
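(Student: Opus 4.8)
The plan is to obtain this theorem as an immediate corollary of the two results already in hand, by simply chaining them together; there is essentially no new content to prove, only a translation between three equivalent phrasings.

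First I would apply Theorem \ref{thm:Geiss-char} to restate the property of being a Fitch relation in purely local, three-vertex terms: $\rel$ is a Fitch relation if and only if every induced subgraph of $\rel$ on three vertices is isomorphic to one of the allowed triangles $A_1,\dots,A_8$. This is verbatim Condition (1) of Lemma \ref{M:L:charact. valid triangles}, so the quantifier over $\{x,y,z\}\in{L\choose 3}$ carries over directly without any reindexing.

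Next I would invoke the equivalence already established in Lemma \ref{M:L:charact. valid triangles}, which tells us that Condition (1) holds if and only if Condition (2) holds. Since Condition (2) is, word for word, the statement ``for every $\{a,b,c\}\in{L\choose 3}$ with $(c,b)\in\rel$ and $(a,b)\notin\rel$, we have $(c,a)\in\rel$ and either $(a,c),(b,c)\in\rel$ or $(a,c),(b,c)\notin\rel$,'' the theorem follows by transitivity of the two biconditionals. Concretely: Fitch relation $\iff$ Condition (1) (by Theorem \ref{thm:Geiss-char}) $\iff$ Condition (2) (by Lemma \ref{M:L:charact. valid triangles}) $\iff$ the displayed forbidden-pattern condition.

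Since both the forward and backward implications are handled by the already-proved biconditionals, I do not anticipate any genuine obstacle here; all the real work (the case analysis over the eight allowed and eight forbidden triangles) was discharged inside the proof of Lemma \ref{M:L:charact. valid triangles}. The only thing to be careful about is a purely cosmetic point: verifying that the variable names and the exact pattern of edges/non-edges in the theorem statement match Condition (2) of the lemma exactly, so that the citation is a literal substitution rather than a paraphrase requiring further justification.
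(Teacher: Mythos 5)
Your proposal is correct and matches the paper exactly: the paper derives this theorem as an immediate consequence of Theorem \ref{thm:Geiss-char} combined with Lemma \ref{M:L:charact. valid triangles}, precisely the chain of biconditionals you describe. No further commentary is needed.
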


\section{An Alternative Proof of Theorem \ref{thm:Geiss-char} }

The key idea of the proof of Theorem \ref{thm:Geiss-char} in \cite[Section 5]{Geiss2018},
which proceeds by induction on the number of leaves,
is to consider the superposition of trees explaining two induced subrelations $\rel_1,\rel_2$, each
of which is obtained from $\rel$ by removing a single vertex from $L$.
This proof, however, is quite involved and very technical, and includes plenty of 
case studies. The characterization of Fitch relations in terms of \HC and \IC 
allows us to establish a significantly shorter and simpler proof of  Theorem \ref{thm:Geiss-char}, 
which we present here. We emphasize that this new proof is solely based on 
Theorem \ref{thm:new-characterization} and Lemma \ref{M:L:charact. valid triangles}.

\begin{proof}[Alternative proof of Theorem \ref{thm:Geiss-char}]

	We omit the ``if-direction'' of the	proof of Theorem \ref{thm:Geiss-char}, 
	since the ``if-direction''  in  \cite{Geiss2018} is fairly
	simple and straightforward to obtain. It
	is based on full enumeration of all 16 edge-labeled trees on three vertices, which eventually shows that
	none of the forbidden triangles can be explained by a tree. 

  For the ``only-if-direction'', let $\rel$ be a relation on $L$ and assume, for contraposition, that 
	$\rel$ is not a Fitch relation. 
	Thus, Theorem \ref{thm:new-characterization} implies that $\rel$ does not satisfy \HC or \IC.
	
	Assume first that $\rel$ does not satisfy \IC. Hence, there is a neighborhood $N\in \Ns[\rel]$ 
	and a vertex $a \in N$ such that $|N[a]|>|N|$. Since $N \in \Ns[\rel]$, there is a vertex $b \in L$ 
	such that $N[b]=N$. Note that $a,b\in N[b]$. The latter, together with 
	$|N[a]|>|N|=|N[b]|$, implies that $a\ne b$ and the existence of a vertex $c \in N[a]\setminus N[b]$. 
	In particular, we have $c\neq a$ and $c\neq b$ and hence $\{a,b,c\} \in {L\choose 3}$. 
		Since $c\notin N[b]$, it must hold that $(c,b) \in \rel$. 
	Since $a \in N[b]$, it must hold that $(a,b) \notin\rel$.
	Since $c \in N[a]$, it must hold that $(c,a) \notin\rel$.	
	Therefore,  Lemma \ref{M:L:charact. valid triangles}(2) is not satisfied, which implies that 
  Lemma \ref{M:L:charact. valid triangles}(1) cannot be satisfied. 
	Therefore, $\rel$ must contain one of the forbidden triangles $F_1,\dots,F_8$.

	Now assume that $\rel$ does not satisfy \HC and thus, $\Ns[\rel]$ is not hierarchy-like.
	Hence, there are two neighborhoods $N,N' \in \Ns[\rel]$, such that $N\cap N' \notin \{\emptyset,N,N'\}$.
  The latter implies, in particular, $N\neq N'$. 
	This and $N,N' \in \Ns[\rel]$ imply that 
	there are two distinct vertices $y,y' \in L$ such that $N[y]=N$ and $N[y']=N'$. 
	Moreover,  $N\cap N' \notin \{\emptyset,N,N'\}$ implies that $N$ and $N'$ are not disjoint. 
	Since $y \in N[y]$ and $y' \in N[y']$, there are two mutually exclusive cases that need to be examined:
		\begin{align*}
			\text{\emph{(a)}} & \text{ none of } y \text{ and } y' \text{ is contained in } N[y']\cap N[y] \text{, and} \\
			\text{\emph{(b)}} & \text{ at least one of } y \text{ and } y' \text{ is contained in } N[y']\cap N[y]. 
		\end{align*}
	\begin{description}
		\item{\emph{Case (a)}:} This case is equivalent to 
			$y \in N[y]\setminus N[y']$ and $y' \in N[y']\setminus N[y]$. 
			Since $N[y']\cap N[y]\neq \emptyset$, there is a vertex $x
			\in N[y]\cap N[y']$ with $x\neq y,y'$. Thus, 
			$x,y$ and $y'$ are pairwise distinct. 
			Since $y \notin N[y']$,  we have $(y,y') \in \rel$.	
			Since $y' \notin N[y]$, we have $(y',y) \in \rel$.  
			Since $x \in N[y']$, we have $(x,y') \notin\rel$, and since $x \in N[y]$, we have $(x,y) \notin \rel$. 
			Now, put $a\coloneqq x$, $b\coloneqq y'$ and $c\coloneqq y$. Hence, we
			have found a subset  $\{a=x,b=y',c=y\} \in {L\choose 3}$ such that 
			$(c,b) \in \rel$, $(a,b) \notin\rel$ and 
			$(a,c)\notin\rel$ and $(b,c)\in \rel$. Thus, Condition (2) of Lemma
			\ref{M:L:charact. valid triangles} is not satisfied; and therefore,
			Condition (1) of Lemma \ref{M:L:charact. valid triangles} is not
			satisfied. Hence, $\rel$ contains a forbidden triangle.
			\smallskip

		\item{\emph{Case (b)}:} This case is equivalent to $y \in N[y]\cap N[y']$ or $y' \in N[y]\cap N[y']$.
			We can assume w.l.o.g.\ that $y \in N[y]\cap N[y']$. Since $N[y]\cap
			N[y'] \notin \{\emptyset,N[y], N[y']\}$, there is an $x \in N[y]\setminus
			N[y']$. The latter, together with $y,y' \in N[y']$ and $y\ne y'$, implies
			that $x,y$ and $y'$ are pairwise distinct. 
			Since $x \notin N[y']$, we have $(x,y')\in \rel$. 
			Since $y \in N[y']$, we have $(y,y') \notin \rel$. 
			Since $x \in N[y]$, we have $(x,y) \notin \rel$. 
			Now, put
			$a\coloneqq y$, $b\coloneqq y'$ and $c\coloneqq x$. 
			 Hence, we
			have found a subset  $\{a=y,b=y',c=x\} \in {L\choose 3}$ such that 
			$(c,b)\in\rel$ and $(a,b) \notin \rel$, but $(c,a)\notin\rel$. 
			Thus, Condition (2) of Lemma \ref{M:L:charact. valid
			triangles} is not satisfied; and therefore, Condition (1) of Lemma
			\ref{M:L:charact. valid triangles} is not satisfied. 
			Hence, $\rel$ contains a forbidden triangle. 
	\end{description}

	In both Cases (a) and (b), the relation $\rel$ contains a forbidden triangle, 
	which proves the ``only-if-direction''. 
  \end{proof}

\section{Summary}
In this contribution, we gave two novel characterizations of Fitch relations
$\rel$. One characterization is based on the neighborhoods $N[y]$ of vertices
$y\in L$ that comprises vertex $y$ and all vertices $x\in L$ with $(x,y)\notin
\rel$. We have shown, that `well-behaved'' collections $\Ns[\rel]$ 
of such neighborhoods (i.e., they satisfy \HC and \IC)  
characterize Fitch relations. Furthermore, the tree
$\To{\rel} = (T,\lambda)$ with hierarchy $\Cls(T)$ consisting of all members of
$\Ns[\rel]$ together with the (possibly additional) sets $L$ and $\{x\}$ with
$x\in L$ and a well-defined labeling $\lambda$ explains the Fitch relation
$\rel$. In particular, $\To{\rel}$ is the unique least-resolved tree for $\rel$.
The second characterization is based on three-vertex induced subrelations and
the observation that allowed triangles share a common simple property, namely,
if $\rel$ contains $(c, b)$ but not $(a, b)$, then $(c, a)\in \rel$ and either
both $(a, c), (b, c)$ or none of them are contained in $\rel$. These results are
used to establish a simpler and significantly shorter proof of the
characterization theorem provided by Gei{\ss} et al.\ \cite[Thm.\ 2]{Geiss2018}. 

\section*{Acknowledgements}
We are grateful to Manuela Gei{\ss} and Peter F.\ Stadler for all the fruitful
and interesting discussions. 
Moreover, we thank Carmen Bruckmann and Annemarie Luise K{\"u}hn for their constructive comments and suggestions
that helped to improve the paper.

\bibliographystyle{abbrv}
\bibliography{literature}

\begin{thebibliography}{10}

\bibitem{Abascal:12}
F.~Abascal, D.~Posada, and R.~Zardoya.
\newblock The evolution of the mitochondrial genetic code in arthropods
  revisited.
\newblock {\em Mitochondrial DNA}, 23:84--91, 2012.

\bibitem{Altenhoff:16}
A.~M. Altenhoff, B.~Boeckmann, S.~Capella-Gutierrez, D.~A. Dalquen, T.~DeLuca,
  K.~Forslund, H.-C. Jaime, B.~Linard, C.~Pereira, L.~P. Pryszcz, F.~Schreiber,
  A.~S. da~Silva, D.~Szklarczyk, C.-M. Train, P.~Bork, O.~Lecompte, C.~von
  Mering, I.~Xenarios, K.~Sj{\"o}lander, L.~J. Jensen, M.~J. Martin,
  M.~Muffato, T.~Gabald{\'o}n, S.~E. Lewis, P.~D. Thomas, E.~Sonnhammer, and
  C.~Dessimoz.
\newblock Standardized benchmarking in the quest for orthologs.
\newblock {\em Nature Methods}, 13:425--430, 2016.

\bibitem{altenhoff2017oma}
A.~M. Altenhoff, N.~M. Glover, C.-M. Train, K.~Kaleb, A.~Warwick~Vesztrocy,
  D.~Dylus, T.~M. de~Farias, K.~Zile, C.~Stevenson, J.~Long, et~al.
\newblock The {OMA }orthology database in 2018: retrieving evolutionary
  relationships among all domains of life through richer web and programmatic
  interfaces.
\newblock {\em Nucleic acids research}, 46(D1):D477--D485, 2017.

\bibitem{Boecker:98}
S.~B{\"o}cker and A.~W.~M. Dress.
\newblock Recovering symbolically dated, rooted trees from symbolic
  ultrametrics.
\newblock {\em Adv. Math.}, 138:105--125, 1998.

\bibitem{Boore:06}
J.~L. Boore.
\newblock The use of genome-level characters for phylogenetic reconstruction.
\newblock {\em Trends Ecol Evol}, 21:439--446, 2006.

\bibitem{Boore:98}
J.~L. Boore and W.~M. Brown.
\newblock Big trees from little genomes: mitochondrial gene order as a
  phylogenetic tool.
\newblock {\em Curr Opin Genet Dev}, 8:668--674, 1998.

\bibitem{Corneil:81}
D.~G. Corneil, H.~Lerchs, and L.~Steward~Burlingham.
\newblock Complement reducible graphs.
\newblock {\em Discr. Appl. Math.}, 3:163--174, 1981.

\bibitem{DBH-survey05}
F.~Delsuc, H.~Brinkmann, and H.~Philippe.
\newblock Phylogenomics and the reconstruction of the tree of life.
\newblock {\em Nature Reviews Genetics}, 6(5):361--375, 2005.

\bibitem{Dessimoz2008}
C.~Dessimoz, D.~Margadant, and G.~H. Gonnet.
\newblock {DLIGHT} -- lateral gene transfer detection using pairwise
  evolutionary distances in a statistical framework.
\newblock In {\em Proceedings RECOMB 2008}, pages 315--330, Berlin, Heidelberg,
  2008. Springer.

\bibitem{Donath:14a}
A.~Donath and P.~F. Stadler.
\newblock Molecular morphology: Higher order characters derivable from sequence
  information.
\newblock In J.~W. W{\"a}gele and T.~Bartolomaeus, editors, {\em Deep Metazoan
  Phylogeny: The Backbone of the Tree of Life. {N}ew insights from analyses of
  molecules, morphology, and theory of data analysis}, chapter~25, pages
  549--562. de Gruyter, Berlin, 2014.

\bibitem{DEML:16}
R.~Dondi, N.~El-Mabrouk, and M.~Lafond.
\newblock Correction of weighted orthology and paralogy relations-complexity
  and algorithmic results.
\newblock In {\em International Workshop on Algorithms in Bioinformatics},
  pages 121--136. Springer, 2016.

\bibitem{Dutilh:08}
B.~E. Dutilh, B.~Snel, T.~J. Ettema, and M.~A. Huynen.
\newblock Signature genes as a phylogenomic tool.
\newblock {\em Mol. Biol. Evol.}, 25:1659--1667, 2008.

\bibitem{Fitch:70}
W.~M. Fitch.
\newblock Distinguishing {{Homologous}} from {{Analogous Proteins}}.
\newblock {\em Systematic Biology}, 19(2):99--113, June 1970.

\bibitem{Fitch:00}
W.~M. Fitch.
\newblock Homology a personal view on some of the problems.
\newblock {\em Trends Genet.}, 16(5):227--231, 2000.

\bibitem{GK13}
T.~Gabald{\'o}n and E.~Koonin.
\newblock Functional and evolutionary implications of gene orthology.
\newblock {\em Nat. Rev. Genet.}, 14(5):360--366, 2013.

\bibitem{Geiss2018}
M.~Gei{\ss}, J.~Anders, P.~F. Stadler, N.~Wieseke, and M.~Hellmuth.
\newblock Reconstructing gene trees from {F}itch's xenology relation.
\newblock {\em Journal of Mathematical Biology}, 77(5):1459--1491, 2018.

\bibitem{GCMRM:79}
M.~Goodman, J.~Czelusniak, G.~W. Moore, A.~E. Romero-Herrera, and G.~Matsuda.
\newblock Fitting the gene lineage into its species lineage, a parsimony
  strategy illustrated by cladograms constructed from globin sequences.
\newblock {\em Systematic Biology}, 28(2):132--163, 1979.

\bibitem{Hellmuth:17}
M.~Hellmuth.
\newblock Biologically feasible gene trees, reconciliation maps and informative
  triples.
\newblock {\em Algorithms for Molecular Biology}, 12(1):23, 2017.

\bibitem{GHLS:17}
M.~Hellmuth, M.~Gei\ss, Y.~Long, and P.~Stadler.
\newblock A short note on undirected {F}itch graphs.
\newblock {\em Art Discrete Appl. Math.}, 1(1):\#P1.08, 2018.

\bibitem{Hellmuth:13a}
M.~Hellmuth, M.~Hernandez-Rosales, K.~T. Huber, V.~Moulton, P.~F. Stadler, and
  N.~Wieseke.
\newblock Orthology relations, symbolic ultrametrics, and cographs.
\newblock {\em J. Math. Biology}, 66(1-2):399--420, 2013.

\bibitem{Hellmuth:17a}
M.~Hellmuth, M.~Hernandez-Rosales, Y.~Long, and P.~Stadler.
\newblock Inferring phylogenetic trees from the knowledge of rare evolutionary
  events.
\newblock {\em J. Math. Biology}, 76(7):1623--1653, 2018.

\bibitem{Hellmuth:16a}
M.~Hellmuth, P.~F. Stadler, and N.~Wieseke.
\newblock The mathematics of xenology: Di-cographs, symbolic ultrametrics,
  2-structures and tree-representable systems of binary relations.
\newblock {\em J. Math. Biol.}, 75(1):199--237, 2017.

\bibitem{HW:16b}
M.~Hellmuth and N.~Wieseke.
\newblock From sequence data including orthologs, paralogs, and xenologs to
  gene and species trees.
\newblock In P.~Pontarotti, editor, {\em Evolutionary Biology: Convergent
  Evolution, Evolution of Complex Traits, Concepts and Methods}, pages
  373--392, Cham, 2016. Springer.

\bibitem{hellmuth_phylogenomics_2015}
M.~Hellmuth, N.~Wieseke, M.~Lechner, H.-P. Lenhof, M.~Middendorf, and P.~F.
  Stadler.
\newblock Phylogenomics with {{Paralogs}}.
\newblock {\em Proceedings of the National Academy of Sciences},
  112(7):2058--2063, 2015.

\bibitem{HHH+12}
M.~Hernandez-Rosales, M.~Hellmuth, N.~Wieseke, K.~T. Huber, and P.~F. Moulton,
  V.and~Stadler.
\newblock From event-labeled gene trees to species trees.
\newblock {\em BMC Bioinformatics}, 13(Suppl 19):S6, 2012.

\bibitem{Jensen:01}
R.~A. Jensen.
\newblock Orthologs and paralogs - we need to get it right.
\newblock {\em Genome Biology}, 2:interactions1002, Aug. 2001.

\bibitem{Koonin:05}
E.~V. Koonin.
\newblock Orthologs, {{Paralogs}}, and {{Evolutionary Genomics}}.
\newblock {\em Annual Review of Genetics}, 39(1):309--338, 2005.

\bibitem{Lafond:16}
M.~Lafond, R.~Dondi, and N.~El-Mabrouk.
\newblock The link between orthology relations and gene trees: A correction
  perspective.
\newblock {\em Algorithms for Molecular Biology}, 11:4, Apr. 2016.

\bibitem{lafond2015orthology}
M.~Lafond and N.~El-Mabrouk.
\newblock Orthology relation and gene tree correction: complexity results.
\newblock In {\em International Workshop on Algorithms in Bioinformatics},
  pages 66--79. Springer, 2015.

\bibitem{Lafond:13}
M.~Lafond, M.~Semeria, K.~M. Swenson, E.~Tannier, and N.~El-Mabrouk.
\newblock Gene tree correction guided by orthology.
\newblock {\em BMC Bioinformatics}, 14(15):S5, Oct. 2013.

\bibitem{Lavrov:07}
D.~V. Lavrov.
\newblock Key transitions in animal evolution: a mitochondrial {DNA}
  perspective.
\newblock {\em Integr Comp Biol}, 47:734--743, 2007.

\bibitem{LO:02}
J.~G. Lawrence and H.~Ochman.
\newblock Reconciling the many faces of lateral gene transfer.
\newblock {\em Trends in microbiology}, 10(1):1--4, 2002.

\bibitem{Lechner:11a}
M.~Lechner, S.~Findei{\ss}, L.~Steiner, M.~Marz, P.~F. Stadler, and S.~J.
  Prohaska.
\newblock \texttt{Proteinortho:} detection of (co-)orthologs in large-scale
  analysis.
\newblock {\em BMC Bioinformatics}, 12:124, 2011.

\bibitem{Lechner:14}
M.~Lechner, M.~Hernandez-Rosales, D.~Doerr, N.~Wiesecke, A.~Thevenin, J.~Stoye,
  R.~K. Hartmann, S.~J. Prohaska, and P.~F. Stadler.
\newblock Orthology detection combining clustering and synteny for very large
  datasets.
\newblock {\em PLoS ONE}, 9(8):e105015, 08 2014.

\bibitem{Mahmood30122011}
K.~Mahmood, G.~I. Webb, J.~Song, J.~C. Whisstock, and A.~S. Konagurthu.
\newblock Efficient large-scale protein sequence comparison and gene matching
  to identify orthologs and co-orthologs.
\newblock {\em Nucleic Acids Research}, 40(6):e44--e44, 2012.

\bibitem{MK:96}
A.~R. Mushegian and E.~V. Koonin.
\newblock A minimal gene set for cellular life derived by comparison of
  complete bacterial genomes.
\newblock {\em Proc. Natl. Acad. Sci. U.S.A}, 93(19):10268--10273, 1996.

\bibitem{Nichio:17}
B.~T.~L. Nichio, J.~N. Marchaukoski, and R.~T. Raitzz.
\newblock New tools in orthology analysis: A brief review of promising
  perspectives.
\newblock {\em Front Genet.}, 8:165, 2017.

\bibitem{Novichkov:04}
P.~S. Novichkov, M.~V. Omelchenko, M.~S. Gelfand, A.~A. Mironov, Y.~I. Wolf,
  and E.~V. Koonin.
\newblock Genome-wide molecular clock and horizontal gene transfer in bacterial
  evolution.
\newblock {\em J. Bacteriology}, 186:6575--6585, 2004.

\bibitem{Rancurel:17}
C.~Rancurel, L.~Legrand, and E.~G.~J. Danchin.
\newblock Alienness: Rapid detection of candidate horizontal gene transfers
  across the tree of life.
\newblock {\em Genes}, 8:E248, 2017.

\bibitem{RSLD15}
M.~Ravenhall, N.~{\v{S}}kunca, F.~Lassalle, and C.~Dessimoz.
\newblock Inferring horizontal gene transfer.
\newblock {\em PLoS Comput Biol}, 11(5):e1004095, 2015.

\bibitem{Rogozin:05}
I.~B. Rogozin, A.~V. Sverdlov, V.~N. Babenko, and E.~V. Koonin.
\newblock Analysis of evolution of exon-intron structure of eukaryotic genes.
\newblock {\em Brief Bioinform}, 6:118--134, 2005.

\bibitem{Rokas:00}
A.~Rokas and P.~W. Holland.
\newblock Rare genomic changes as a tool for phylogenetics.
\newblock {\em Trends Ecol Evol}, 15:454--459, 2000.

\bibitem{Sankoff:82}
D.~Sankoff, G.~Leduc, N.~Antoine, B.~Paquin, B.~F. Lang, and R.~Cedergren.
\newblock Gene order comparisons for phylogenetic inference: Evolution of the
  mitochondrial genome.
\newblock {\em Proc. Natl. Acad. Sci. USA}, 89:6575--6579, 1992.

\bibitem{Sempere:06}
L.~F. Sempere, C.~N. Cole, M.~A. McPeek, and K.~J. Peterson.
\newblock The phylogenetic distribution of metazoan {microRNAs}: insights into
  evolutionary complexity and constraint.
\newblock {\em J Exp Zoolog B Mol Dev Evol.}, 306:575--588, 2006.

\bibitem{Semple2003}
C.~Semple and M.~Steel.
\newblock {\em {Phylogenetics}}, volume~24 of {\em Oxford Lecture Series in
  Mathematics and its Applications}.
\newblock {Oxford University Press}, Oxford, Feb. 2003.

\bibitem{inparanoid:10}
E.~Sonnhammer and G.~{\"O}stlund.
\newblock Inparanoid 8: orthology analysis between 273 proteomes, mostly
  eukaryotic.
\newblock {\em Nucleic Acids Research}, 43(D1):D234--D239, 2015.

\bibitem{steel_phylogeny:_2016}
M.~Steel.
\newblock {\em {Phylogeny: Discrete and Random Processes in Evolution}}.
\newblock CBMS-NSF Regional Conference Series in Applied Mathematics. {Society
  for Industrial and Applied Mathematics}, Philadelphia, Nov. 2016.

\bibitem{TG+00}
R.~L. Tatusov, M.~Y. Galperin, D.~A. Natale, and E.~V. Koonin.
\newblock The {C}{O}{G} database: a tool for genome-scale analysis of protein
  functions and evolution.
\newblock {\em Nucleic Acids Research}, 28(1):33--36, 2000.

\bibitem{TKL:97}
R.~L. Tatusov, E.~V. Koonin, and D.~J. Lipman.
\newblock A genomic perspective on protein families.
\newblock {\em Science}, 278(5338):631--637, 1997.

\bibitem{WPFR:07}
I.~Wapinski, A.~Pfeffer, N.~Friedman, and A.~Regev.
\newblock Automatic genome-wide reconstruction of phylogenetic gene trees.
\newblock {\em Bioinformatics}, 23(13):i549--i558, 2007.

\end{thebibliography}

\end{document}